\theoremstyle{definition}
\newtheorem{theorem}{Theorem}[]
\newtheorem{corollary}[theorem]{Corollary}
\newtheorem{claim}[]{Claim}
\newtheorem{conjecture}[theorem]{Conjecture}
\numberwithin{equation}{section}
\title{Degree sum conditions and a 2-factor with a bounded number of cycles in claw-free graphs}
\author{Masaki Kashima\thanks{Faculty of Science and Technology, Keio University, Yokohama, Japan. email: masaki.kashima10@gmail.com}}
\begin{document}

\maketitle

\begin{abstract}
    A claw-free graph is a graph that does not contain $K_{1,3}$ as an induced subgraph, and a 2-factor is a 2-regular spanning subgraph of a graph.
    In 1997, Ryj\'{a}\v{c}ek introduced the closure concept of claw-free graphs, and Hamilton cycles and related structures in claw-free graphs have been intensively studied via the closure concept.
    In this paper, using the closure concept, we show that for a claw-free graph $G$ of order $n$, if every independent set $I$ of $G$ satisfies $|I|\leq \delta_G(I)-1$ and $G$ satisfies $\sigma_{k+1}(G)\geq n$, then $G$ has a 2-factor with at most $k$ cycles, where $\delta_G(I)$ denotes the minimum degree of the vertices in $I$.
    As a corollary of the result, we show that every claw-free graph $G$ with $\delta(G)\geq \alpha(G)+1$ has a 2-factor with at most $\alpha(G)$ cycles, which partially solves a conjecture by Faudree et al. in 2012.
\end{abstract}
\textbf{Keywords:} claw-free graph, 2-factor, minimum degree, minimum degree sum, independent set

\section{Introduction}

Throughout the paper, we only consider simple, finite, and undirected graphs.
For a graph $G$, let $|G|$ denote the order of $G$, let $\delta(G)$ 
denote the minimum degree of $G$, and let $\alpha(G)$ denote the independence number of $G$. 
For a positive integer $n$, let $K_n$ denote the complete graph of order $n$.

A graph that does not contain a \emph{claw} $K_{1,3}$ as an induced subgraph is said to be \emph{claw-free}.
A \emph{Hamilton cycle} of a graph is a cycle passing through all the vertices.
A graph with a Hamilton cycle is said to be \emph{Hamiltonian}.
Motivated by a well-known conjecture by Matthews and Sumner~\cite{MS1984} which states that every 4-connected claw-free graph is Hamiltonian, sufficient conditions for a claw-free graph to have a Hamilton cycle and related structures have been intensively studied for a long time.

A \emph{2-factor} is a 2-regular spanning subgraph of a graph.
Obviously, a Hamilton cycle is a connected 2-factor.
Choudum et al.~\cite{CP1991} and Egawa and Ota~\cite{EO1991} independently showed that every claw-free graph $G$ with $\delta(G)\geq 4$ has a 2-factor by using a criterion for the existence of a 2-factor by Tutte~\cite{T1952}.
Since a 2-factor with fewer cycles is closer to a Hamilton cycle, sufficient conditions for a claw-free graph to have a 2-factor with a bounded number of cycles have been studied.
Faudree et al.~\cite{FFFLL1999} showed that a claw-free graph $G$ with $\delta(G)\geq 4$ has a 2-factor with at most $\frac{6|G|}{\delta(G)+1}-1$ cycles, and Broersma et al.~\cite{BPY2009} showed an almost tight bound $\max\left\{\frac{n-3}{\delta(G)-1},1\right\}$ of the number of cycles with the same assumption.

For a graph $G$ and a positive integer $k$, let 
$$\sigma_k(G):=\min\left\{\sum_{v\in I}d_G(v)\;\middle|\; I\text{ is an independent set of order }k\text{ of }G\right\}$$
if $\alpha(G)\geq k$ and let $\sigma_k(G)=\infty$ otherwise.
Fron\v{c}ek et al.~\cite{FRS2004} showed the following minimum degree sum condition for a claw-free graph to have a 2-factor with a bounded number of cycles.

\begin{theorem}[\cite{FRS2004}]\label{thm:sigma k}
    Let $k\geq 3$ be an integer and let $G$ be a claw-free graph of order $n\geq 3(k+1)^2-3$.
    If $\delta(G)\geq 3k-1$ and $\sigma_{k+1}(G)>n+k^2-2k+4$, then $G$ has a 2-factor with at most $k$ cycles.
\end{theorem}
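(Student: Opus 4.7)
The plan is to reduce to a line-graph setting via Ryj\'a\v{c}ek's closure, translate the 2-factor problem into a trail-covering problem, and then use the degree sum condition to force merges among too many trails.

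First, I would apply the closure operation. The closure of a claw-free graph $G$ is the line graph $L(H)$ of some triangle-free graph $H$, and it is known that the closure preserves both the existence of a 2-factor with at most $k$ cycles and the relevant degree conditions (closure does not decrease $\delta$ or $\sigma_{k+1}$). So I may assume $G=L(H)$. In this setting a 2-factor of $G$ with $c$ cycles corresponds to a partition of $E(H)$ into $c$ edge-disjoint closed trails whose union contains every vertex of $H$ (a dominating closed trail system). Note $|E(H)|=|G|=n$.

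Since $\delta(G)\ge 3k-1\ge 8$, the Choudum--Paulraj/Egawa--Ota theorem guarantees some 2-factor, so we may pick one minimizing the number of cycles, giving trails $T_1,\dots,T_c$ in $H$. Suppose for contradiction $c\ge k+1$. By minimality, the $T_i$ must be pairwise vertex-disjoint in $H$: any common vertex would allow us to splice two trails into one. Now I would choose one edge $e_i=u_iv_i$ from each of $T_1,\dots,T_{k+1}$; vertex-disjointness of the trails makes $\{e_1,\dots,e_{k+1}\}$ a matching in $H$, hence an independent set of size $k+1$ in $G$. Applying the hypothesis $\sigma_{k+1}(G)>n+k^2-2k+4$ and translating via $d_G(u_iv_i)=d_H(u_i)+d_H(v_i)-2$ yields
\[
\sum_{i=1}^{k+1}\bigl(d_H(u_i)+d_H(v_i)\bigr)>n+k^2-2k+4+2(k+1).
\]
I would further sharpen this by choosing the $e_i$ to maximize the left-hand side, e.g.\ picking inside each $T_i$ an edge incident to a vertex of maximum $H$-degree in that trail.

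The main obstacle is the merging step: converting the large degree sum into an actual reduction in the number of trails. The idea is that the above inequality forces some $T_i$ to contain a vertex $x$ with many $H$-neighbours outside $T_i$; since $H$ is triangle-free and the trails partition $E(H)$, these neighbours must lie on other trails, and it suffices to find one neighbour $y\in T_j$ with $j\ne i$ such that swapping a short path along the edge $xy$ lets us fuse $T_i$ and $T_j$ into a single dominating closed trail. Ensuring such an $x,y$ exist is where the order assumption $n\ge 3(k+1)^2-3$ and the minimum degree bound $\delta(G)\ge 3k-1$ enter: the former guarantees that the trails cannot all be tiny, while the latter rules out local configurations (short trails with low-degree vertices) in which the fusion move is blocked. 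Making this rerouting argument rigorous---tracking how a closed trail through $x$ is broken and reattached through $y$ while remaining dominating---is the technical heart of the proof, and is essentially a careful Eulerian-rerouting argument on $H$.
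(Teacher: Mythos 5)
The statement you are proving is not proved in this paper at all: it is quoted from Fron\v{c}ek, Ryj\'a\v{c}ek and Skupie\'n \cite{FRS2004}, and the paper only remarks that it follows from Corollary~\ref{cor:ind-mindeg} in the special range $k\geq \frac{\alpha(G)+2}{3}$. So your attempt has to stand on its own, and as written it has two genuine gaps. First, your translation to the preimage graph $H$ is incorrect. A 2-factor of $L(H)$ with $c$ cycles does \emph{not} correspond to a partition of $E(H)$ into $c$ closed trails containing every vertex of $H$; the correct correspondence (Gould--Hynds, Theorem~\ref{thm:k system}) is with a \emph{dominating system} of cardinality $c$: pairwise edge-disjoint subgraphs, each a closed trail \emph{or a star with at least three edges}, such that every edge outside the system has an end on some closed trail of the system. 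Edges of $H$ need not be covered at all (only dominated), and pendant edges of $H$ can never lie on a closed trail, so the star members are unavoidable. Your later steps (vertex-disjointness ``by splicing'', choosing one edge per trail to get a matching) silently assume the all-closed-trail, edge-partition picture, and the stars break both the splicing claim and the clean counting $|E(H)|=n$ bookkeeping you rely on.

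Second, and more fundamentally, the step you label ``the technical heart'' is exactly the proof, and you have not supplied it. You derive a degree-sum inequality from an independent set of $k+1$ matching edges, but you never show how the surplus over $n$ forces a concrete reconfiguration that reduces the number of trails (or stars), nor where the specific threshold $n+k^2-2k+4$, the order bound $n\geq 3(k+1)^2-3$, or $\delta(G)\geq 3k-1$ quantitatively enter. Saying that some vertex must have many neighbours on other trails and that ``a short path swap'' should fuse two trails is a plausible heuristic, but the rerouting must preserve the dominating property and cope with trails meeting only through dominated (uncovered) edges, with stars, and with small components; this is precisely where such arguments fail without careful case analysis. For contrast, the analogous argument in this paper (proof of Theorem~\ref{thm:preimage}) makes that reduction precise via Claim~\ref{claim:cycle} (any cycle meets $E(\mathcal{D}_1)$ in at least two edges, by the extremal choice of the dominating system) and a Hall-type analysis of the star centers; something of that nature, adapted to the stronger numerical hypotheses of \cite{FRS2004}, would be needed before your outline counts as a proof.
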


In particular, setting $k=\alpha(G)$ for a claw-free graph $G$, Theorem~\ref{thm:sigma k} implies that if $|G|\geq 3(\alpha(G)+1)^2-3$ and $\delta(G)\geq 3\alpha(G)-1$, then $G$ has a 2-factor with at most $\alpha(G)$ cycles.
Faudree et al.~\cite{FMOY2012} showed the following degree conditions for a claw-free graph to have a 2-factor with exactly $\alpha(G)$ cycles.

\begin{theorem}[\cite{FMOY2012}]\label{thm:independent}
    Let $G$ be a claw-free graph of order $n$ such that $\delta(G)\geq \frac{2n}{\alpha(G)}-2$ and $n\geq \frac{3\alpha(G)^3}{2}$.
    For every maximum independent set $S$ of $G$, $G$ has a 2-factor with $\alpha(G)$ cycles such that each cycle contains exactly one vertex of $S$.
\end{theorem}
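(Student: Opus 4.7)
The plan is to use the degree hypothesis together with the claw-freeness at each vertex of $S$ to pin down an essentially rigid structure on the bipartite edges between $S$ and $V(G)\setminus S$, and then to assemble the $\alpha$ required cycles by routing paths through the common-neighbour classes. Write $\alpha := \alpha(G)$ and $S = \{s_1, \dots, s_\alpha\}$.

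First I would establish a structural rigidity lemma. Since $G$ is claw-free and $S$ is independent, any $v \in V(G)\setminus S$ satisfies $|N_G(v)\cap S| \leq 2$, for otherwise three independent neighbours of $v$ in $S$ together with $v$ would induce a claw. Because $S$ is maximum and hence maximal, every such $v$ has at least one neighbour in $S$. Counting edges between $S$ and $V(G)\setminus S$ in two ways yields
\[
    2n-2\alpha \;\leq\; \sum_{i=1}^{\alpha} d_G(s_i) \;\leq\; 2(n-\alpha),
\]
where the lower bound uses $\delta(G) \geq \tfrac{2n}{\alpha}-2$. Since the two extremes agree, equality is forced throughout: each $d_G(s_i)$ equals exactly $\tfrac{2n}{\alpha}-2$ (in particular $\alpha \mid 2n$), and every $v \in V(G)\setminus S$ has exactly two neighbours in $S$. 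For $1 \leq i < j \leq \alpha$ I would define the common-neighbour class
\[
    A_{ij} := \{v \in V(G)\setminus S : N_G(v)\cap S = \{s_i, s_j\}\}.
\]
These sets partition $V(G)\setminus S$, and $\sum_{j \neq i} |A_{ij}| = \tfrac{2n}{\alpha}-2$ for each $i$.

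Next I would apply claw-freeness at each $s_i$: the neighbourhood $N_G(s_i)$ has independence number at most $2$, so each $A_{ij} \subseteq N_G(s_i) \cap N_G(s_j)$ has independence number at most $2$ and is structurally close to a union of two cliques. This near-clique structure should let me build a Hamilton path through any designated subset of $A_{ij}$ with endpoints of my choice among its $s_i$- and $s_j$-neighbours. The plan for the 2-factor is then: for each pair $\{i,j\}$ choose a partition $A_{ij} = A_{ij}^{\to i} \cup A_{ij}^{\to j}$ assigning each common-neighbour vertex to the cycle that will contain $s_i$ or to the cycle that will contain $s_j$; inside each $A_{ij}^{\to i}$ build a Hamilton path with at least one endpoint adjacent to $s_i$; and at each $s_i$ concatenate the $\alpha-1$ incoming paths in some cyclic order around $s_i$ to form the cycle $C_i$ through $s_i$.

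The hard part will be making the $\binom{\alpha}{2}$ local assignments globally consistent, so that the object glued at each $s_i$ is a single cycle rather than a union of sub-cycles, and so that the Hamilton paths required inside every $A_{ij}^{\to i}$ actually exist with a prescribed endpoint at $s_i$. I expect this to reduce to a Hall-type feasibility problem on an auxiliary bipartite graph whose one side is indexed by path-slots at the $s_i$'s and whose other side is indexed by the classes $A_{ij}$. The quantitative slack provided by $n \geq \tfrac{3\alpha^3}{2}$ forces the average $|A_{ij}|$ to be of order $3\alpha$, which should make Hall's condition straightforward and leave enough room for the delicate adjustments when some $A_{ij}$ is small or when its two cliques are badly linked to $s_i$ and $s_j$. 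The interior Hamilton-path construction, and the verification that it can be done simultaneously over all pairs, is where most of the technical work will go.
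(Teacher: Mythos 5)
This theorem is quoted by the paper from \cite{FMOY2012} without proof, so there is no internal argument to compare yours against; judging the proposal on its own terms, the opening rigidity step is correct and is a genuinely good start. Since $S$ is a maximum independent set and $G$ is claw-free, every $v\notin S$ has at least one and at most two neighbours in $S$, so $\alpha(G)\,\delta(G)\le\sum_{s\in S}d_G(s)=e(S,V(G)\setminus S)\le 2(n-\alpha(G))$, and the hypothesis $\delta(G)\ge \frac{2n}{\alpha(G)}-2$ indeed forces equality throughout: every vertex outside $S$ has exactly two neighbours in $S$, every $s_i$ has degree exactly $\frac{2n}{\alpha(G)}-2$, and the classes $A_{ij}$ partition $V(G)\setminus S$ as you claim.

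The construction of the 2-factor, however, has genuine gaps. First, a graph with independence number at most $2$ need not have a Hamilton path: $G[A_{ij}]$ (and a fortiori an arbitrary designated subset $A_{ij}^{\to i}$) can be the disjoint union of two cliques with no edge between them, so the key claim that the near-clique structure lets you build a Hamilton path through any designated subset is false as stated (and the ``prescribed endpoint'' part is vacuous anyway, since every vertex of $A_{ij}$ is adjacent to both $s_i$ and $s_j$). Second, the gluing step is not well defined: a cycle passes through $s_i$ exactly once, using exactly two edges at $s_i$, so you cannot ``concatenate the $\alpha-1$ incoming paths in some cyclic order around $s_i$''; the paths must instead be linked to one another by edges between different classes $A_{ij}^{\to i}$ and $A_{ij'}^{\to i}$ inside $N(s_i)$, and such edges need not exist --- $\alpha(G[N(s_i)])\le 2$ only guarantees at most two components, and when $G[N(s_i)]$ is two disjoint cliques no cycle through $s_i$ confined to $\{s_i\}\cup N(s_i)$ can cover both, so the clean pair-by-pair assignment cannot always be realized and vertices must be rerouted through other classes. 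These are exactly the situations where the hypothesis $n\ge \frac{3\alpha(G)^3}{2}$ has to do quantitative work, but in the proposal its role is only the average-size remark, and the Hall-type feasibility problem that is supposed to resolve the global consistency is never formulated. As it stands this is a plausible strategy with the hard core (simultaneous local Hamilton paths plus a correct gluing mechanism) left open, not a proof.
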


Note that every claw-free graph $G$ of order $n$ satisfies $\delta(G)\geq \frac{2n}{\alpha(G)}-2$.
Combining two assumptions $\delta(G)\geq \frac{2n}{\alpha(G)}-2$ and $n\geq \frac{3\alpha(G)^3}{2}$, we obtain a minimum degree condition of $\delta(G)\geq 3\alpha(G)^2-2$.
They conjectured in the same paper that the lower bound of the minimum degree can be much smaller as follows.

\begin{conjecture}[\cite{FMOY2012}]\label{conj:independent}
    Let $G$ be a claw-free graph with $\delta(G)\geq \alpha(G)+1$.
    Then $G$ has a 2-factor with exactly $\alpha(G)$ cycles.
\end{conjecture}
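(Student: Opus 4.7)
The plan is to combine the paper's main theorem (stated in the abstract) with a cycle-splitting procedure. Set $\alpha:=\alpha(G)$. The hypothesis $\delta(G)\geq \alpha+1$ implies $|I|\leq \alpha\leq \delta_G(I)-1$ for every independent set $I$, while no independent set of size $\alpha+1$ exists so $\sigma_{\alpha+1}(G)=\infty\geq n$ vacuously. Hence the main theorem, applied with $k=\alpha$, supplies a 2-factor $F$ of $G$ with at most $\alpha$ cycles. If $F$ has exactly $\alpha$ cycles, we are done; otherwise I would iteratively modify $F$ to strictly increase the number of cycles until $\alpha$ is reached.

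The key step is the following splitting lemma: \emph{under the conjecture's hypothesis, if $G$ admits a 2-factor with $c<\alpha$ cycles, then $G$ admits a 2-factor with $c+1$ cycles.} Fix a maximum independent set $S$ with $|S|=\alpha$. Since $c<\alpha$, the pigeonhole principle forces some cycle $C=v_0v_1\cdots v_{\ell-1}v_0$ of the current 2-factor to contain two distinct, and hence non-adjacent, vertices $u,v\in S$. To split $C$ in place, it suffices to find indices $a<b$ on $C$ with $v_{a+1}v_b\in E(G)$ and $v_av_{b+1}\in E(G)$: deleting the cycle-edges $v_av_{a+1}, v_bv_{b+1}$ and adding those two chords yields a 2-factor with one more cycle. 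Because $G$ is claw-free, each neighborhood has independence number at most $2$, and combined with $\delta(G)\geq \alpha+1$ this forces $u$ and $v$ to have many neighbors with strong clique-like interaction. Insertion/rotation arguments along $C$ around $u$ and $v$ should then produce the required chord pair, in the spirit of the standard closure-based toolkit for claw-free graphs.

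The main obstacle is exactly this splitting step. On a single prescribed cycle $C$ the needed chord pair can fail to exist, since the excess neighbors of $u$ and $v$ above their degrees in $C$ may all lie on other cycles of the 2-factor. To circumvent this, I expect one must allow two-cycle exchanges: remove one edge from $C$ and one from another cycle $C'$, then reattach via two inter-cycle chords producing either a direct split or a merge-then-split that nets one more cycle. A complementary route is to invoke the Ryj\'{a}\v{c}ek closure $cl(G)$ of $G$, which is the line graph of a triangle-free graph $H$ and preserves the number of cycles in a 2-factor; under this correspondence a 2-factor of $cl(G)$ with exactly $\alpha$ cycles translates into a decomposition of $E(H)$ into $\alpha$ edge-disjoint closed trails satisfying the vertex-type constraints inherited from $cl(G)$, and local edge-swaps in the triangle-free $H$ that raise the trail count may be easier to control than surgeries in $G$ itself. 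Either way, the crux is ensuring that the cycle-increasing operation respects claw-freeness and the degree-sum hypothesis, which is what makes the jump from ``at most $\alpha$'' to ``exactly $\alpha$'' genuinely open.
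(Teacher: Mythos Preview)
The statement you are attempting to prove is Conjecture~\ref{conj:independent}, which the paper does \emph{not} prove; it is quoted from \cite{FMOY2012} and remains open. What the paper establishes is only the weaker Corollary~\ref{cor:at most independent}: a 2-factor with \emph{at most} $\alpha(G)$ cycles. Your first paragraph reproduces exactly the paper's derivation of that corollary from Theorem~\ref{thm:main} (the hypothesis $\delta(G)\geq\alpha(G)+1$ forces $|I|\leq\delta_G(I)-1$ for every independent $I$, and $\sigma_{\alpha+1}(G)=\infty$), so up to that point you agree with the paper.

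Everything after that---the splitting lemma and the two-cycle exchange heuristic---is not in the paper, and you yourself correctly flag it as the genuine obstacle. Two concrete difficulties deserve emphasis. First, your proposed chord-pair split on a single cycle can indeed fail for the reason you give, and the suggested inter-cycle exchanges are not controlled by the hypotheses: claw-freeness and $\delta(G)\geq\alpha+1$ give no obvious leverage to guarantee a net gain of one cycle rather than a merge. Second, your alternative route through the Ryj\'{a}\v{c}ek closure is blocked in a more basic way: Theorem~\ref{thm:closure 2factor} only transfers a 2-factor with $k$ cycles in $\text{cl}(G)$ to one with \emph{at most} $k$ cycles in $G$, so even a perfect ``exactly $\alpha$'' result for $\text{cl}(G)$ would collapse back to ``at most $\alpha$'' in $G$. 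Thus the closure machinery, which drives the entire paper, is inherently one-sided and cannot by itself upgrade ``at most'' to ``exactly''. Your proposal is therefore not a proof but a restatement of why the conjecture is hard; the paper makes no claim beyond the ``at most'' bound.
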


They verified that the degree condition $\delta(G)\geq \alpha(G)+1$ can not be improved in terms of the existence of a 2-factor with exactly $\alpha(G)$ cycles.
For 2-connected claw-free graphs, Ku\v{z}el et al.~\cite{KOY2012} showed the following.

\begin{theorem}[\cite{KOY2012}]\label{thm:2conn independent}
    Let $G$ be a 2-connected claw-free graph.
    For every maximum independent set $S$ of $G$, $G$ has a 2-factor with at most $\alpha(G)$ cycles such that each cycle contains at least one vertex of $S$.
\end{theorem}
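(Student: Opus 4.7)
My plan is to apply the Ryj\'{a}\v{c}ek closure and reduce the theorem to an edge-decomposition problem on the triangle-free preimage graph, then solve that problem by a constructive induction anchored at the matching edges.

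First, let $cl(G)$ be the Ryj\'{a}\v{c}ek closure of $G$. By Ryj\'{a}\v{c}ek's theorem, $cl(G) = L(H)$ for some triangle-free graph $H$. Known properties of the closure give that $cl(G)$ is 2-connected (so $H$ is essentially 2-edge-connected), that $\alpha(cl(G)) = \alpha(G)$, and that $G$ has a 2-factor with at most $k$ cycles if and only if $cl(G)$ does. Independent sets of $L(H)$ are exactly matchings of $H$, so a maximum independent set of $cl(G)$ of size $\alpha(G)$ corresponds to a maximum matching $M$ of $H$ with $|M| = \alpha(G)$.

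Second, I would use the standard correspondence between 2-factors of $L(H)$ and partitions of $E(H)$ into closed trails: the $c$ cycles of the 2-factor correspond to closed trails $T_1, \ldots, T_c$ with $E(H) = E(T_1) \sqcup \cdots \sqcup E(T_c)$, and an edge $e$ of $H$ lies on cycle $C_i$ of the 2-factor if and only if $e \in E(T_i)$. The theorem thus reduces to the following: for every triangle-free essentially 2-edge-connected graph $H$ with a maximum matching $M$, $E(H)$ admits a partition into at most $|M|$ closed trails such that each trail contains at least one edge of $M$.

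Third, I would prove this reduced claim by induction on $|E(H)|$. Pick an edge $e \in M$; by essential 2-edge-connectivity, there is a closed trail $T$ in $H$ containing $e$. Remove $E(T)$; if the remaining graph retains enough connectivity and has $M \setminus \{e\}$ (or a suitable variant) as a matching of the required size, apply the inductive hypothesis. Residual edges that cannot be handled by induction are absorbed into an existing trail via ear-type extensions, so that the total trail count never exceeds $|M|$.

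The main obstacle is managing the induction: excising a closed trail may destroy essential 2-edge-connectivity, or leave a component whose edges contain no remaining matching edge to anchor a new trail. Overcoming this demands a careful rule for choosing which matching edge and which trail to peel off at each step, likely via an exchange argument sensitive to both the matching and the already-built trails, with triangle-freeness of $H$ used to exclude pathological local configurations. A secondary subtlety is pulling the final decomposition back to a 2-factor of $G$ whose cycles meet the \emph{given} maximum independent set $S$ (rather than merely some maximum independent set of $cl(G)$); this relies on the local structure of the closure operation together with the claw-freeness of $G$.
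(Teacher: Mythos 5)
First, note that the paper does not prove this statement at all: it is quoted from Ku\v{z}el, Ozeki and Yoshimoto \cite{KOY2012}, so your proposal can only be judged on its own internal soundness, and there it has genuine gaps. The central one is in your second step: a 2-factor of $L(H)$ with $c$ cycles does \emph{not} correspond to a partition of $E(H)$ into $c$ closed trails. The correct correspondence is the Gould--Hynds theorem (Theorem~\ref{thm:k system} here): $L(H)$ has a 2-factor with $c$ cycles if and only if $H$ has a \emph{dominating system} of cardinality $c$, i.e.\ $c$ pairwise edge-disjoint subgraphs, each a closed trail or a star with at least three edges, such that every remaining edge is dominated by one of the closed trails. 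A partition of $E(H)$ into closed trails exists only when every vertex of $H$ has even degree, so the ``reduced claim'' you formulate is unattainable for most triangle-free preimages $H$, and the whole reduction collapses at this point; any repair must work with dominating systems (trails plus large stars plus dominated edges), which changes the combinatorics of the inductive step substantially.

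There are two further problems. Your first step asserts $\alpha(\mathrm{cl}(G))=\alpha(G)$; this is false in general, since the closure adds edges and can strictly decrease the independence number (e.g.\ $K_4$ minus an edge is claw-free with a locally connected vertex and closes to $K_4$). Worse, the prescribed maximum independent set $S$ of $G$ need not remain independent in $\mathrm{cl}(G)$ at all, and Theorem~\ref{thm:closure 2factor} only transfers the \emph{number} of cycles back from $\mathrm{cl}(G)$ to $G$, with no control over which vertices lie on which cycles; so the requirement that every cycle meet the given $S$ does not survive your closure reduction. You acknowledge this as a ``secondary subtlety,'' but it is in fact a core difficulty of the theorem and is left without any mechanism. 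Finally, the induction in your third step is only a sketch: the acknowledged obstacles (loss of connectivity after deleting a trail, components containing no anchoring matching edge) are precisely where the real work lies, and ``ear-type extensions'' are not specified in a way that bounds the number of trails by $|M|$. As it stands, the proposal does not constitute a proof.
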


As a corollary of Theorem~\ref{thm:2conn independent}, it follows that every 2-connected claw-free graph $G$ has a 2-factor with at most $\alpha(G)$ cycles.

\subsection{Main result}\label{subsec:main result}

For a graph $G$ (not necessarily claw-free) and an independent set $I$ of $G$, let $\delta_G(I)$ denote the minimum degree of vertices contained in $I$.
The author showed the following in \cite{Karxiv}, which is a new type degree condition for a graph to have a 2-factor.

\begin{theorem}[\cite{Karxiv}]\label{thm:general graph}
    Let $G$ be a graph.
    If every independent set $I$ of $G$ satisfies $|I|\leq \delta_G(I)-1$, then $G$ has a 2-factor.
\end{theorem}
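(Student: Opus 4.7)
The plan is to argue by contradiction using Tutte's $f$-factor theorem applied with $f\equiv 2$. Suppose $G$ has no 2-factor. Then there exist disjoint subsets $S,T\subseteq V(G)$ violating the $f$-factor condition, namely
\begin{equation*}
q(S,T)>2|S|-2|T|+2e_G(T)+\sum_{C}e_G(V(C),T),
\end{equation*}
where the sum ranges over the components $C$ of $G-S-T$ and $q(S,T)$ counts those components for which $e_G(V(C),T)$ is odd. Since each odd component contributes at least $1$ to the sum, this already forces $|S|+e_G(T)<|T|$, so $T$ is large and internally sparse, and in particular $|S|<|T|$.

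Next I would choose the violating pair $(S,T)$ extremally, for instance minimising $|S|+|T|$, to obtain local optimality: moving a vertex from $S$ to $T$, or from $T$ into an adjacent component of $G-S-T$, cannot restore the deficit. The target is to build an independent set $I$ contradicting the hypothesis $|I|\le \delta_G(I)-1$. The natural candidate is to take one vertex $w_C$ from each odd component $C$; these $w_C$ are pairwise non-adjacent since distinct components share no edges in $G-S-T$, so $|I|=q(S,T)$. When $C$ is a singleton $\{w_C\}$ one has $N_G(w_C)\subseteq S\cup T$ and hence $d_G(w_C)\le |S|+|T|$, a bound that meshes well with $|S|<|T|$ and with the Tutte violation.

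The main obstacle is controlling $\delta_G(I)$ when $C$ is not a singleton: the naive estimate $d_G(w_C)\le |S|+|T|+|V(C)|-1$ is too weak to contradict $|I|\le \delta_G(I)-1$. Here the extremality of $(S,T)$ has to be used to force sharp structural information --- ideally either that non-singleton odd components are rare, or that each such $C$ admits a representative $w_C$ with only a controlled number of neighbours inside $C$ and into $T$ --- possibly combined with replacing some $w_C$ by a suitably chosen independent subset of $T$, made available by the sparsity of $G[T]$. Combining these degree bounds for the members of $I$ with the Tutte inequality and summing over the odd components should then yield $|I|\ge \delta_G(I)$, contradicting the hypothesis. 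Making these competing estimates fit together tightly is the heart of the proof.
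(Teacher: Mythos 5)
This statement is quoted from \cite{Karxiv} and the present paper gives no proof of it, so there is nothing internal to compare against; judged on its own terms, your proposal is an outline rather than a proof, and the missing part is exactly the essential part. Your setup is fine: Tutte's $f$-factor theorem with $f\equiv 2$, the violation $q(S,T)>2|S|-2|T|+2e_G(T)+\sum_C e_G(V(C),T)$, and the consequence $|S|+e_G(T)<|T|$ are all correct. But the hypothesis you must contradict is ``$|I|\le \delta_G(I)-1$ for every independent set $I$,'' so you need to exhibit an independent set $I$ together with a vertex of $I$ whose degree is at most $|I|-1$, and your candidate $I=\{w_C : C \text{ odd component}\}$ does not get close. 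Its size is $q(S,T)$, and the Tutte violation gives no lower bound on $q(S,T)$ in terms of $|S|$, $|T|$ or the degrees of the representatives: after discarding the term $\sum_C e_G(V(C),T)\ge q(S,T)$ the inequality only says $|T|>|S|+e_G(T)$, and a violating pair can perfectly well have $q(S,T)=1$ while $|S|+|T|$ is large. So even in the all-singleton case the bound $d_G(w_C)\le|S|+|T|$ proves nothing, and for non-singleton components you yourself note that the naive bound is too weak. The devices you invoke to repair this --- an extremal choice of $(S,T)$, augmenting $I$ by an independent subset of $T$ (note $T$ is only sparse, $e_G(T)<|T|-|S|$, not independent, and its vertices may have large degree into $S$ and into the components), or structural control of big odd components --- are named but never executed, and you explicitly concede that fitting these estimates together ``is the heart of the proof.'' As it stands, therefore, the proposal identifies a plausible framework but contains a genuine gap: no independent set violating $|I|\le\delta_G(I)-1$ is ever produced, and no concrete consequence of the extremal choice of $(S,T)$ is derived that would produce one.
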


In the same paper, the author conjectured the following on the existence of a 2-factor with a bounded number of cycles.

\begin{conjecture}[\cite{Karxiv}]\label{conj:general graph}
    Let $k$ be a positive integer and let $G$ be a graph of order $n$.
    If $\sigma_{k+1}(G)\geq n$ and every independent set $I$ of $G$ satisfies $|I|\leq \delta_G(I)-1$, then $G$ has a 2-factor with at most $k$ cycles.
\end{conjecture}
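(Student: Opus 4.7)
The plan is to combine the existence result of Theorem~\ref{thm:general graph} with a cycle-merging argument driven by the hypothesis $\sigma_{k+1}(G)\geq n$. By Theorem~\ref{thm:general graph}, the local hypothesis $|I|\leq \delta_G(I)-1$ guarantees that $G$ has at least one 2-factor, so I would choose a 2-factor $F=C_1\cup\cdots\cup C_m$ with $m$ minimum and suppose for contradiction that $m\geq k+1$.

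The heart of the argument is to find, for some pair of cycles $C_i,C_j$, a \emph{$\theta$-exchange}: two chords $xy$ and $x'y'$ of $G$ with $x,x'$ consecutive on $C_i$ and $y,y'$ consecutive on $C_j$. Deleting the cycle edges $xx'\in E(C_i)$ and $yy'\in E(C_j)$ and inserting the chords $xy$ and $x'y'$ then merges $C_i\cup C_j$ into a single cycle, contradicting the minimality of $m$. To force such an exchange, I would build an \emph{independent transversal of cycles}, i.e.\ vertices $v_1,\ldots,v_{k+1}$ with $v_i\in V(C_i)$ forming an independent set in $G$. The hypothesis $\sigma_{k+1}(G)\geq n$ applied to this set gives $\sum_{i}d_G(v_i)\geq n$, and since the $C_i$'s partition the $n$ vertices of $G$, a pigeonhole count produces two indices $i,j$ for which $v_i$ and $v_j$ collectively have many edges into $V(C_j)$ and $V(C_i)$, yielding the pair of consecutive-chord exchanges.

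The main obstacle is the construction of the independent transversal. If the greedy selection fails, it is because some cycle $C_i$ is contained in the union of the closed neighbourhoods of the vertices $v_1,\ldots,v_t$ already chosen from preceding cycles. In that case I would extract from $V(C_i)$ a small independent set $J$ whose degrees are capped by the obstruction; the localised condition $|I|\leq \delta_G(I)-1$ applied to $\{v_1,\ldots,v_t\}\cup J$ then either completes the transversal after a swap or identifies an edge between the cycles usable for a $\theta$-exchange directly. Short cycles, in particular triangles of $F$, need separate care because the exchange has less room; here the bound $\delta_G(I)\geq |I|+1$ is what guarantees such a cycle contains a vertex with a neighbour outside, keeping the exchange feasible.

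For the claw-free case that is the paper's actual target (per the abstract), the same strategy should streamline through the Ryj\'a\v{c}ek closure: passing to $\mathrm{cl}(G)$ preserves all of the hypotheses, reduces us to working in the line graph of a triangle-free graph $H$, turns cycles of $F$ into closed trails of $H$ and independent vertices into matchings of $H$, and turns the $\theta$-exchange into a simple trail-merging operation at a common vertex, which should make the transversal-construction obstacle substantially easier to manage. The corollary for $\delta(G)\geq \alpha(G)+1$ then follows by taking $k=\alpha(G)$, since this degree bound gives $|I|\leq \delta_G(I)-1$ for every independent $I$ and renders $\sigma_{\alpha(G)+1}(G)=\infty\geq n$ by convention.
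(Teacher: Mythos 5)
The statement you were asked to prove is presented in the paper only as a conjecture (Conjecture~\ref{conj:general graph}, quoted from \cite{Karxiv}); the paper does not prove it, and only establishes the claw-free case (Theorem~\ref{thm:main}) by an entirely different route (Ryj\'{a}\v{c}ek closure, line graphs of triangle-free graphs, and dominating systems \`a la Gould--Hynds), not by cycle exchanges. So a correct argument along your lines would be new — but what you have written is a plan with its two decisive steps missing. First, the pigeonhole step does not produce the $\theta$-exchange. From an independent transversal $v_1,\dots,v_{k+1}$ with $\sum_i d_G(v_i)\geq n$ you can only certify chords \emph{emanating from the single vertices} $v_i$ and $v_j$; the exchange you describe needs two chords $xy$ and $x'y'$ with $x,x'$ consecutive on $C_i$ and $y,y'$ consecutive on $C_j$, i.e.\ control over the neighbourhood of the successor $x'$ of a neighbour of $v_j$ on $C_i$, which the degree-sum hypothesis does not give. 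Two chords leaving one and the same vertex cannot merge two vertex-disjoint cycles of a 2-factor, so the contradiction with the minimality of $m$ never materializes as stated.

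Second, the construction of the independent transversal — which you yourself identify as the main obstacle — is left unresolved. If some cycle $C_i$ of the minimal 2-factor induces a clique (a triangle, say) all of whose vertices lie in the closed neighbourhoods of the already chosen $v_1,\dots,v_t$, then there is no independent set $J\subseteq V(C_i)$ of size larger than one to which the condition $|I|\leq\delta_G(I)-1$ could be usefully applied, and the sentence ``either completes the transversal after a swap or identifies an edge between the cycles usable for a $\theta$-exchange directly'' is an assertion, not an argument; the swap and the resulting exchange are never specified. Finally, your last paragraph on the claw-free case gestures at the closure reduction but replaces the paper's actual mechanism — a dominating system of minimum cardinality, Claim~\ref{claim:cycle}, Hall's theorem applied to the bipartite graph $F$ between star centers and their private leaves, and the degree-sum count over the matching $M'$ (plus the independent-set condition in Case~2-2) — by an unproved ``trail-merging at a common vertex'' step, so it does not recover even the special case that the paper proves.
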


In this paper, we show the following, which implies that Conjecture~\ref{conj:general graph} holds for claw-free graphs.

\begin{theorem}\label{thm:main}
    Let $k$ be a positive integer and let $G$ be a claw-free graph of order $n$.
    If $\sigma_{k+1}(G)\geq n$ and every independent set $I$ of $G$ satisfies $|I|\leq \delta_G(I)-1$, then $G$ has a 2-factor with at most $k$ cycles.
\end{theorem}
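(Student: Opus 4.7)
My plan is to exploit Ryj\'a\v{c}ek's closure to reduce to a line graph setting, and then derive a contradiction by a counting argument on a minimum 2-factor. By Theorem~\ref{thm:general graph}, the hypothesis $|I|\leq \delta_G(I)-1$ already ensures that $G$ has a 2-factor, so I fix a 2-factor $F=C_1\cup\cdots\cup C_c$ with $c$ minimum and aim for a contradiction under $c\geq k+1$. Passing to the closure $cl(G)=L(H)$ for a triangle-free $H$ preserves the minimum number of cycles in a 2-factor, as well as both hypotheses: the order remains $n$; closure only adds edges, so $\sigma_{k+1}$ cannot decrease; every independent set of $cl(G)$ is also independent in $G$ and degrees in $cl(G)$ are at least those in $G$, so the inequality $|I|\leq \delta_G(I)-1$ persists. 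Hence I may assume $G=L(H)$ with $|E(H)|=n$.

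The key step is then to select representatives $e_1,\ldots,e_{k+1}$, one edge of $H$ from each of $k+1$ distinct cycles of $F$, so that $\{e_1,\ldots,e_{k+1}\}$ is a matching in $H$---equivalently, an independent set of size $k+1$ in $G$. Writing $e_i=x_iy_i$, the condition $\sigma_{k+1}(G)\geq n$ yields
\[
\sum_{i=1}^{k+1}\bigl(d_H(x_i)+d_H(y_i)-2\bigr)\geq n.
\]
Minimality of $c$ should force every $L(H)$-neighbor of each $e_i$ to lie within $V(C_i)$: otherwise, an edge of $H$ adjacent to $e_i$ and lying in some $V(C_j)$ with $j\ne i$ would enable a $4$-vertex switching merging $C_i$ and $C_j$ and reducing $c$. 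Combined with $e_i\in V(C_i)$, this bounds each summand by $|V(C_i)|-1$, and summing over $i$ yields
\[
\sum_{i=1}^{k+1}\bigl(d_H(x_i)+d_H(y_i)-2\bigr)\leq \sum_{i=1}^{k+1}\bigl(|V(C_i)|-1\bigr)\leq n-(k+1)<n,
\]
contradicting the lower bound.

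The main obstacle is making the two ``should'' steps rigorous. First, the matching selection is delicate because a cycle of $F$ in $L(H)$ need not correspond to a closed trail in $H$: $K_{1,s}$-type cycles, in which several edges of $H$ all meet at one vertex, prevent two of their edges from appearing together in a matching, and triangle-freeness of $H$ together with the condition $|I|\leq \delta_G(I)-1$ should be used to control or exclude such degenerate cycles. Second, the switching argument is not automatic: the mere presence of an $L(H)$-neighbor of $e_i$ in a different cycle typically demands a fourth edge in $L(H)$ between the relevant $F$-neighbors to complete the swap. Claw-freeness of $L(H)$ restricts neighborhoods (every three $L(H)$-neighbors of a vertex admit an edge among them), and triangle-freeness of $H$ controls which pairs of edges of $H$ are non-adjacent in $L(H)$, so a careful case analysis should deliver the required swap. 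I expect most of the technical weight to lie in the interaction of these three constraints: claw-freeness of $G$, triangle-freeness of $H$, and the independent set degree condition.
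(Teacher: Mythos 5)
Your reduction via the closure is the same first step as the paper's, but the core of your argument has two unproved ``should'' steps, and the second one is not just unproved --- it is false as stated. Minimality of the number of cycles in a 2-factor does \emph{not} force every neighbor of a representative vertex $e_i$ to lie inside its own cycle $C_i$: two cycles of a minimum 2-factor of a claw-free graph can have many edges between them with no possible merge (take two disjoint triangles joined by one edge, or the paper's own extremal graph $G_k$, whose minimum 2-factor has $k+1$ cycles pairwise joined by edges through the cut vertices $v_i$). Claw-freeness only guarantees an edge among any three neighbors of a vertex; it does not supply the fourth edge needed to complete your cycle-merging switch. So the bound $d_G(e_i)\leq |V(C_i)|-1$ has no justification, and the counting collapses. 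The first gap (choosing one vertex from each of $k+1$ cycles of the 2-factor so that the corresponding edges of $H$ form a matching) is also genuinely hard: distinct cycles of a 2-factor of $L(H)$ are vertex-disjoint in $L(H)$, but their edge sets in $H$ can freely share endpoints, and a cycle of $L(H)$ may come entirely from a star of $H$, so a system of distinct representatives need not exist without further work.

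The paper avoids both problems by never arguing directly about 2-factors of $L(H)$. It uses Gould--Hynds (Theorem~\ref{thm:k system}) to translate the problem into finding a dominating system of $H$ (edge-disjoint closed trails and stars) of cardinality at most $k$, and then takes a dominating system minimizing the cardinality and, subject to that, maximizing the number of covered edges. In $H$ the exchange arguments are tractable: the extremal choice yields Claim~\ref{claim:cycle} (every cycle of $H$ meets some closed trail of the system in at least two edges), which is the correct substitute for your switching step. The matching-selection issue becomes a Hall-condition question on the bipartite graph between star centers and their private leaves: when a matching covering the centers exists, a counting argument close in spirit to yours (build a matching $M'$ with one edge per system member, apply $\sigma_{k+1}(G)\geq n$, find a cycle in the induced subgraph, contradict Claim~\ref{claim:cycle}) finishes; when Hall fails, a minimal deficient set $S$ is analyzed in two subcases, and it is only there that the hypothesis $|I|\leq\delta_G(I)-1$ is invoked, to kill the configuration via an explicit independent set $N$ violating it. To rescue your outline you would essentially have to reconstruct this machinery in the preimage $H$; as written, the proposal's two central steps remain open and one of them cannot be repaired in the form you state it.
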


The minimum degree sum condition $\sigma_{k+1}(G)\geq n$ is best possible for any $k\geq 1$.
Indeed, for a positive integer $k$, let $H_0$ be a complete graph of order $k+3$ with vertices $\{v_1,v_2,\dots , v_{k+3}\}$ and let $H_1, H_2, \dots , H_k$ be $k$ disjoint copies of the complete graph $K_{k+2}$.
The graph $G_k$ is obtained from $H_0, H_1, \dots , H_k$ by joining all the vertices of $H_i$ and the vertex $v_i\in V(H_0)$ for each $i\in \{1,2,\dots ,k\}$.
By the definition, $G_k$ is a claw-free graph of order $|G_k|=k+3+k(k+2)=k^2+3k+3$.
Since $\delta(G_k)=k+2$ and $\alpha(G_k)=k+1$, every independent set $I$ of $G_k$ satisfies $|I|\leq \alpha(G_k)=\delta(G_k)-1\leq \delta_{G_k}(I)-1$.
Let $u_i$ be a vertex of $H_i$ for each $i\in \{1,2,\dots ,k\}$.
Considering an independent set $I=\{v_{k+1},u_1,u_2,\dots ,u_k\}$ of $G_k$, we infer that $\sigma_{k+1}(G_k)=(k+1)\delta(G_k)=(k+1)(k+2)=n-1$.
On the other hand, since each of $\{v_1, v_2, \dots , v_k\}$ is a cut vertex of $G_k$ and $G-\{v_1,v_2,\dots , v_k\}$ has $k+1$ components, every 2-factor of $G_k$ consists of at least $k+1$ cycles.

If a graph $G$ satisfies $\delta(G)\geq \alpha(G)+1$, then every independent set $I$ of $G$ satisfies $|I|\leq \alpha(G)\leq \delta(G)-1\leq \delta_G(I)-1$.
Thus, the following corollary directly follows from Theorem~\ref{thm:main}.

\begin{corollary}\label{cor:ind-mindeg}
    Let $k$ be a positive integer and let $G$ be a claw-free graph of order $n$. 
    If $\sigma_{k+1}(G)\geq n$ and $\delta(G)\geq \alpha(G)+1$, then $G$ has a 2-factor with at most $k$ cycles.
\end{corollary}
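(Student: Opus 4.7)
My plan is to deduce this corollary directly from Theorem~\ref{thm:main}. The minimum degree sum condition $\sigma_{k+1}(G)\geq n$ is already assumed in the hypothesis of the corollary, so the only remaining task is to verify that every independent set $I$ of $G$ satisfies $|I|\leq \delta_G(I)-1$. Once that holds, Theorem~\ref{thm:main} applied to $G$ produces a 2-factor with at most $k$ cycles, which is exactly the desired conclusion.

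To verify the independent-set inequality, I would chain three elementary bounds. For any independent set $I$ of $G$, we have $|I|\leq \alpha(G)$ by definition of the independence number. The hypothesis $\delta(G)\geq \alpha(G)+1$ rearranges to $\alpha(G)\leq \delta(G)-1$. Finally, since $\delta_G(I)$ is the minimum degree taken over the subset $I\subseteq V(G)$ while $\delta(G)$ is the minimum over all of $V(G)$, we have $\delta(G)\leq \delta_G(I)$. Concatenating these estimates yields $|I|\leq \alpha(G)\leq \delta(G)-1\leq \delta_G(I)-1$, as required.

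There is essentially no obstacle in this argument; the corollary is a purely definitional consequence of Theorem~\ref{thm:main}. The entire write-up amounts to displaying the three-term chain above and then invoking the main theorem, so the proof fits comfortably in one short paragraph.
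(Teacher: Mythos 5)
Your proposal is correct and matches the paper's own argument: the paper also notes that $\delta(G)\geq \alpha(G)+1$ gives $|I|\leq \alpha(G)\leq \delta(G)-1\leq \delta_G(I)-1$ for every independent set $I$, and then applies Theorem~\ref{thm:main} directly. Nothing is missing.
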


In particular, when $k$ and $G$ satisfy $k\geq \frac{\alpha(G)+2}{3}$, 
the assumption $\delta(G)\geq 3k-2$ implies $\delta(G)\geq \alpha(G)+1$.
Thus, for such $k$ and $G$, Theorem~\ref{thm:sigma k} follows from Corollary~\ref{cor:ind-mindeg}.

By setting $k=\alpha(G)$, we have the following corollary, which is a partial result of Conjecture~\ref{conj:independent}.

\begin{corollary}\label{cor:at most independent}
    Let $G$ be a claw-free graph with $\delta(G)\geq \alpha(G)+1$.
    Then $G$ has a 2-factor with at most $\alpha(G)$ cycles.
\end{corollary}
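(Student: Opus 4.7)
The statement is an essentially immediate consequence of Corollary~\ref{cor:ind-mindeg}, so my plan is to simply specialise that corollary to the right value of $k$ and check that its hypotheses hold. Concretely, I would take $k:=\alpha(G)$ and apply Corollary~\ref{cor:ind-mindeg} to this $k$; the output is a 2-factor with at most $k=\alpha(G)$ cycles, which is exactly what we want.

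The only thing to verify is that the two hypotheses of Corollary~\ref{cor:ind-mindeg} are satisfied for this choice of $k$. The minimum degree condition $\delta(G)\geq \alpha(G)+1$ is given by assumption, so the second hypothesis is immediate. For the first hypothesis, $\sigma_{k+1}(G)\geq n$, the point is that $k+1=\alpha(G)+1>\alpha(G)$, so $G$ has no independent set of order $k+1$. By the convention introduced in the introduction (\textquotedblleft $\sigma_k(G)=\infty$ if $\alpha(G)<k$\textquotedblright), this forces $\sigma_{k+1}(G)=\infty\geq n$, and the hypothesis holds vacuously.

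Having verified both hypotheses, Corollary~\ref{cor:ind-mindeg} yields the desired 2-factor with at most $\alpha(G)$ cycles. There is no substantive obstacle: all the real work has already been done in Theorem~\ref{thm:main} and its corollary, and the present statement is just a clean packaging obtained by choosing $k=\alpha(G)$ so that the degree-sum hypothesis becomes trivially true.
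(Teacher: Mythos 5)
Your proposal is correct and matches the paper's derivation exactly: the paper obtains this corollary from Corollary~\ref{cor:ind-mindeg} by setting $k=\alpha(G)$, with the degree-sum hypothesis holding vacuously since $\sigma_{\alpha(G)+1}(G)=\infty$ by the stated convention. Your write-up simply makes explicit the brief justification the paper leaves implicit.
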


The paper is organized as follows; we introduce the closure concept of claw-free graphs by Ryj\'{a}\v{c}ek~\cite{R1997} in Section~\ref{sec:closure}, and we prove Theorem~\ref{thm:preimage} in Section~\ref{sec:proof}, which is equivalent to our main result.

\section{The closure concept of claw-free graphs}\label{sec:closure}

In 1997, Ryj\'{a}\v{c}ek~\cite{R1997} introduced the concept of closure of claw-free graphs.
For a claw-free graph $G$, the \emph{closure} of $G$, denoted by $\text{cl}(G)$, is the graph obtained from $G$ by repetition of the completion of the neighbors of a locally connected vertex as long as possible.
It was shown by Ryj\'{a}\v{c}ek~\cite{R1997} that the closure is well-defined and remains a claw-free graph.
Furthermore, the following basic properties were shown by Ryj\'{a}\v{c}ek~\cite{R1997}.
For a graph $H$, let $L(H)$ denote the line graph of $H$.

\begin{theorem}[\cite{R1997}]\label{thm:closure hamilton}
    Let $G$ be a claw-free graph and let $\text{cl}(G)$ be the closure of $G$.
    Then followings are true.
    \begin{enumerate}
        \item There is a triangle-free graph $H$ such that $\text{cl}(G)=L(H)$.
        \item If $\text{cl}(G)$ has a Hamilton cycle, then $G$ has a Hamilton cycle.
    \end{enumerate}
\end{theorem}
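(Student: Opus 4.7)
The plan is to prove both parts of Theorem~\ref{thm:closure hamilton} through a structural analysis of a single closure step, followed by iteration. As a preliminary, I would verify that completing the neighborhood of a locally connected vertex in a claw-free graph $G$ yields another claw-free graph $G^+$: an induced claw $\{x;y_1,y_2,y_3\}$ in $G^+$ must use at least one newly added edge inside $N_G(v)$, and a short case analysis on the intersection of $\{x,y_1,y_2,y_3\}$ with $\{v\} \cup N_G(v)$, invoking the connectedness of $N_G(v)$ in $G$, produces an induced claw already in $G$ --- a contradiction. The well-definedness of $\text{cl}(G)$ then follows from a standard confluence argument comparing two different orderings of closure operations.

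For part (i), the strategy is to exhibit a \emph{Krausz decomposition} of $\text{cl}(G)$, i.e., a covering of the edges by cliques such that every vertex lies in at most two cliques of the cover; by Krausz's characterization, such a decomposition determines a (unique) $H$ with $\text{cl}(G) = L(H)$, and triangle-freeness of $H$ corresponds to every triangle of $\text{cl}(G)$ being contained in a single clique of the decomposition. The local structure needed is the following: for every vertex $w$ of $\text{cl}(G)$, claw-freeness combined with termination of the closure process forces $N_{\text{cl}(G)}(w)$ to induce either a single clique (when $w$ is locally connected, so its neighborhood has already been completed) or exactly two disjoint cliques (when $w$ is not locally connected, where claw-freeness forbids three or more components of $N(w)$ and forces each component to be a clique). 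Assembling these local two-clique covers into a global edge partition gives the desired decomposition.

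For part (ii), it suffices to prove the one-step version: if $G^+$ is obtained from a claw-free $G$ by completing $N_G(v)$ for some locally connected $v$, and $G^+$ has a Hamilton cycle $C$, then so does $G$. The two edges of $C$ incident to $v$ already belong to $G$, so the only potentially bad edges of $C$ are chords among $N_G(v)$. My plan is to exploit the connectedness of $G[N_G(v)]$ to reroute $C$ inside $G[N_G(v) \cup \{v\}]$: extract the segments of $C$ whose endpoints both lie in $N_G(v)$ and reassemble them using paths in $G[N_G(v)]$ together with a repositioning of $v$, producing a Hamilton cycle of $G$ on the same vertex set.

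I expect the main obstacle to be in part (ii). A naive detour across a single bad chord fails because every vertex of $G$ already appears on $C$ and cannot be revisited along a new path. The correct argument must replace the entire $N_G(v)$-portion of $C$ simultaneously, exploiting both the connectedness of $G[N_G(v)]$ in $G$ and the freedom to choose where $v$ is inserted; careful bookkeeping of how the non-$N_G(v)$ subpaths of $C$ attach to $N_G(v)$ is where the technical heart of Ryj\'{a}\v{c}ek's original proof lies, and I expect this to require the most delicate case analysis.
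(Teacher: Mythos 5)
This theorem is not proved in the paper at all: it is quoted from Ryj\'{a}\v{c}ek's paper [R1997], so your proposal can only be compared with the known argument. Your part (i) follows essentially the standard route and is sound in outline: in $\text{cl}(G)$ every locally connected vertex has a complete neighbourhood (otherwise the closure operation could still be applied), and for a locally disconnected vertex claw-freeness forces the neighbourhood to have exactly two components, each of which must be a clique; from the resulting property that every neighbourhood is a disjoint union of at most two cliques one shows that every edge lies in a unique maximal clique, which yields a Krausz partition in which each triangle of $\text{cl}(G)$ lies inside a single clique, and the associated preimage $H$ is then triangle-free. You gloss the assembly step (``assembling these local two-clique covers into a global edge partition''), but the missing uniqueness-of-maximal-clique verification is routine, so this half is completable as sketched.

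The genuine gap is part (ii). What you give there is a statement of intent rather than a proof: you correctly reduce to the one-step claim that completing $N_G(v)$ at a locally connected vertex $v$ preserves hamiltonicity, and you correctly diagnose why a naive detour fails (every vertex already lies on $C$), but your proposed fix --- rerouting $C$ inside $G[N_G(v)\cup\{v\}]$ by ``reassembling'' the segments of $C$ --- is exactly the point at issue, and connectedness of $G[N_G(v)]$ alone does not tell you how to reconnect the endpoints of those segments using only edges of $G$ while visiting every vertex exactly once. You explicitly defer this (``the technical heart \dots the most delicate case analysis''), but that deferred step is the entire content of the lemma. The known proofs handle it by a minimality argument: among Hamilton cycles of the completed graph $G^+$, take one, $C$, with as few edges outside $E(G)$ as possible; for a bad edge $y_1y_2$ of $C$ (with $y_1,y_2\in N_G(v)$) one uses claw-freeness, the position of $v$ on $C$, and a path of $G[N_G(v)]$ joining $y_1$ and $y_2$ to produce, after a case analysis on how that path meets $C$, another Hamilton cycle of $G^+$ with strictly fewer bad edges --- whence a Hamilton cycle of $G$ itself. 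Until you carry out such a reduction (or a genuinely complete version of your simultaneous rerouting, with the bookkeeping of how the non-$N_G(v)$ subpaths attach), part (ii) remains unproven in your write-up.
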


Analogously, on the existence of a 2-factor, Ryj\'{a}\v{c}ek et al.~\cite{RSS1999} showed the following.

\begin{theorem}[\cite{RSS1999}]\label{thm:closure 2factor}
    Let $G$ be a claw-free graph and let $\text{cl}(G)$ be the closure of $G$.
    If $\text{cl}(G)$ has a 2-factor with $k$ cycles, then $G$ has a 2-factor with at most $k$ cycles.
\end{theorem}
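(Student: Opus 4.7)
The plan is to induct on the number of local completion steps used to reach $\mathrm{cl}(G)$ from $G$, so it suffices to handle a single step. Let $v$ be a locally connected vertex of the claw-free graph $G$, set $M = N_G(v)$ and $G' = G \cup \binom{M}{2}$, and assume $G'$ admits a 2-factor $F$ with $k$ cycles. All edges of $G'\setminus G$ lie in $\binom{M}{2}$, so modifications to $F$ will be confined to $M \cup \{v\}$. I will call an edge of $F$ \emph{bad} if it is not an edge of $G$; the strategy is to eliminate bad edges one at a time by local exchanges that never increase the cycle count, ending with a 2-factor of $G$ with at most $k$ cycles.

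The key local exchange I would use proceeds as follows. Let $xy \in E(F)$ be a bad edge, and let $C$ be the $F$-cycle containing $xy$. First, if $v \notin V(C)$, I would merge $C$ with the $F$-cycle $C_v$ containing $v$ by removing the edges $vu$ and $xy$ (for an $F$-neighbor $u$ of $v$) and adding two appropriate $G$-edges; claw-freeness applied to $\{v,u,x,y\}$ together with $xy\notin E(G)$ guarantees that either $ux$ or $uy$ lies in $E(G)$, enabling the merge, which decreases the cycle count by one. Assume now $v\in V(C)$, with $F$-neighbors $a,b$ on $C$, so that $C$ has the cyclic form $v,a,\ldots,x,y,\ldots,b,v$. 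Removing $va$ and $xy$ yields two paths with endpoints $\{a,x\}$ and $\{y,v\}$; re-closing them with the pair $\{vx, ay\}$ reverses the $v$-to-$x$ segment of $C$ and reassembles a single cycle of the same length using only $G$-edges, provided $ay\in E(G)$. By claw-freeness on $\{v,a,x,y\}$, at least one of $ax,ay$ lies in $E(G)$; symmetrically, on $\{v,b,x,y\}$, at least one of $bx,by$ does. If $ay\in E(G)$ or $bx\in E(G)$ (the latter enabling the symmetric swap that removes $vb$ and $xy$), one of these two swaps finishes the exchange. The residual configuration $ax, by\in E(G)$ and $ay, bx\notin E(G)$ I would handle using local connectivity of $v$: this supplies an $a$--$b$ path in $G[M]$, and a longer exchange routed along this path accomplishes the same bad-edge elimination without splitting $C$.

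The main obstacle is that a naive swap can split one cycle into two; the key combinatorial observation I would have to make is that the swap must be chosen so that it \emph{reverses} the segment from $v$ to the bad edge rather than splitting the cycle, and the existence of the required $G$-edges for the correctly oriented swap is precisely what claw-freeness (together with local connectivity in the residual case) supplies. Each such exchange strictly decreases the number of bad edges without ever increasing the cycle count, so iteration terminates in a bad-edge-free 2-factor of $G'$, i.e., a 2-factor of $G$, with at most $k$ cycles.
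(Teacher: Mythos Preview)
The paper does not prove this theorem; it is quoted from Ryj\'{a}\v{c}ek--Saito--Schelp \cite{RSS1999} and used as a black box, so there is no in-paper argument to compare your proposal against. Your reduction to a single local-completion step at a locally connected vertex $v$, and your swaps in the two easy situations (when $v\notin V(C)$, and when $v\in V(C)$ with $ay\in E(G)$ or $bx\in E(G)$), are correct and standard.

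The residual case, however, is a genuine gap rather than a routine omission. You assert that the $a$--$b$ path in $G[N_G(v)]$ supplied by local connectivity yields ``a longer exchange \ldots\ without splitting $C$,'' but you specify no edges to delete or add, and it is not clear such an exchange exists with the required properties. That path lives in $G$, not in $F$: its internal vertices may lie on $F$-cycles other than $C$, or have $F$-neighbours outside $N_G(v)$, so there is no evident way to thread $F$ along it while simultaneously (i) eliminating $xy$, (ii) introducing no new bad edges, and (iii) keeping the cycle count from increasing. Concretely, if $ab\in E(G)$ there is a clean three-edge swap (delete $va,vb,xy$; add $vx,vy,ab$) that keeps one cycle, but once $ab\notin E(G)$ you are forced to bring in an internal vertex $p$ of the $a$--$b$ path, and then the position of $p$ in $F$ drives a further case analysis that your sketch does not address. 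This is exactly the configuration where the real work lies; a one-bad-edge-at-a-time greedy swap does not obviously succeed here, and the usual route is to choose $F$ extremally (minimising bad edges among $2$-factors of $G'$ with at most $k$ cycles) and analyse the structure of $F$ inside $N_G[v]$ more globally. You also implicitly assume $a,b\notin\{x,y\}$ when forming the potential claws $\{v;a,x,y\}$ and $\{v;b,x,y\}$; those boundary cases need separate (short, but not identical) treatment.
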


Therefore, in order to find a 2-factor with few cycles in a claw-free graph $G$, it suffices to find a 2-factor of the closure $\text{cl}(G)$ with few cycles.

For a graph $H$, a closed trail $C$ of $H$ is said to be \emph{dominating} if every edge $e\in E(H)\setminus E(C)$ has at least one end that is contained in $C$.
Harary and Nash-Williams~\cite{HN1965} showed the following, which states that the existence of a dominating closed trail of $H$ is equivalent to the existence of a Hamilton cycle of its line graph.

\begin{theorem}[\cite{HN1965}]\label{thm:harary NW}
    Let $H$ be a graph without isolated vertices.
    The line graph $L(H)$ has a Hamilton cycle if and only if either $H$ has a dominating closed trail or $H$ is a star with at least three edges.
\end{theorem}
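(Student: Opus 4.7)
My plan is to prove the equivalence by translating between Hamilton cycles in $L(H)$ and closed trails in $H$. The vertices of $L(H)$ correspond to edges of $H$, and a Hamilton cycle in $L(H)$ is precisely a cyclic ordering $e_1 e_2 \cdots e_m e_1$ of all edges of $H$ in which consecutive pairs share a vertex in $H$. This is the bridge I will exploit in both directions.

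For the forward direction, given such a cyclic ordering, I define $v_i$ to be the unique common endpoint of $e_i$ and $e_{i+1}$ (well-defined because $L(H)$ is simple, so $e_i \neq e_{i+1}$). I then produce a closed walk in $H$ with vertex sequence $v_0 v_1 \cdots v_m = v_0$, traversing the edge $e_i$ exactly when $v_{i-1} \neq v_i$ and remaining in place otherwise. Each edge $e_j$ is either used in this walk (when $v_{j-1} \neq v_j$) or incident to the vertex $v_j = v_{j-1}$ lying on the walk, so every edge of $H$ is dominated. Since the $e_i$ are pairwise distinct, no edge is repeated and the walk is a closed trail. The only degenerate case is when all the $v_i$ coincide, but then every edge of $H$ is incident to that common vertex, forcing $H$ to be a star $K_{1,m}$; and $m \geq 3$ follows since $L(K_{1,m}) = K_m$ is Hamiltonian only for $m \geq 3$.

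For the converse, if $H = K_{1,m}$ with $m \geq 3$, then $L(H) = K_m$ is Hamiltonian. Otherwise, let $T = w_0 f_1 w_1 \cdots f_t w_t$ with $w_t = w_0$ be a dominating closed trail. The cycle $f_1 f_2 \cdots f_t f_1$ in $L(H)$ serves as a backbone. Each remaining edge $e \in E(H) \setminus E(T)$ has, by dominance, an endpoint $w_{i(e)}$ on $T$, so $e$ is adjacent in $L(H)$ to every other edge incident to $w_{i(e)}$. Thus I insert all extra edges sharing a given $w_i$ as a consecutive block between $f_i$ and $f_{i+1}$ in the backbone; adjacency within the block and at both boundaries holds because edges incident to a common vertex of $H$ form a clique in $L(H)$. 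The result is a Hamilton cycle of $L(H)$.

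The main obstacle is the bookkeeping in the forward direction: checking that collapsing consecutive equal $v_i$'s still yields a closed trail and that every edge of $H$ is dominated. Both reduce to elementary observations about the cyclic sequence $v_0 v_1 \cdots v_m$, using only the distinctness of the $e_i$. As this is a classical result of Harary and Nash-Williams from 1965 and is independent of the closure machinery used elsewhere in the paper, I would treat the sketch above as essentially the full argument rather than try to streamline it further.
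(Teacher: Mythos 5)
The paper does not prove this statement at all: it is quoted verbatim as a classical result of Harary and Nash--Williams and used as a black box (and in fact only its generalization, Theorem~\ref{thm:k system} of Gould and Hynds, is invoked in the proofs). So your self-contained argument is necessarily a different route, and it is essentially correct. Your converse is the standard insertion argument: the backbone $f_1\cdots f_t$ is a cycle of $L(H)$ because a nontrivial closed trail in a simple graph has $t\geq 3$ distinct edges, and each dominated edge, assigned to one chosen occurrence $w_{i(e)}$, can be spliced into the block at that vertex since edges sharing a vertex of $H$ form a clique in $L(H)$; note you are implicitly using the convention that a dominating closed trail is nontrivial (contains an edge), which is the intended reading here, since otherwise the trivial one-vertex trail in $K_{1,2}$ would falsify the ``if'' direction. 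Your forward direction is also sound, but the one point you wave at deserves the explicit line: when $v_{j-1}=v_j$ you must check that this vertex actually lies on the trail $C$ formed by the traversed edges, i.e.\ is an endpoint of some traversed $e_i$. This follows by taking the maximal cyclic block of indices on which the value $v_j$ is constant; if the sequence is not constant, the transition edge at the boundary of that block is traversed and has $v_j$ as an end, while the constant case is exactly your star case $K_{1,m}$ with $m\geq 3$. (One can also note, though it is not needed under the usual conventions, that your construction never produces a trail with one or two edges: two traversed edges with the same pair of ends would coincide in a simple graph.) With those small verifications spelled out, your proof is complete and independent of the closure machinery, which is a reasonable thing to include only if one wants the paper self-contained; otherwise the citation suffices, as the author chose.
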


For a graph $H$, a \emph{dominating system of $H$} is a set $\mathcal{D}=\{D_1,D_2,\dots , D_k\}$ of subgraphs of $H$ such that 
\begin{itemize}
    \item $D_1,D_2,\dots ,D_k$ are pairwise edge-disjoint,
    \item for each $i\in \{1,2,\dots ,k\}$, $D_i$ is either a closed trail or a star with at least three edges, and 
    \item for each $e\in E(H)\setminus\left(\bigcup_{i=1}^k E(D_i)\right)$, at least one end of $e$ is contained in a closed trail in $\mathcal{D}$.
\end{itemize}
The number of subgraphs in a dominating system $\mathcal{D}$ is called the \emph{cardinality} of $\mathcal{D}$.
Gould and Hynds~\cite{GH1999} showed the following as a generalization of Theorem~\ref{thm:harary NW}.

\begin{theorem}[\cite{GH1999}]\label{thm:k system}
    Let $H$ be a graph without isolated vertices.
    The line graph $L(H)$ has a 2-factor with $k$ cycles if and only if $H$ has a dominating system of cardinality $k$.
\end{theorem}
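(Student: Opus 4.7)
The plan is to prove the equivalence in two directions, treating each cycle of a 2-factor of $L(H)$ as a structural object in $H$ (for the forward direction) and conversely building a 2-factor of $L(H)$ cycle-by-cycle from the dominating system (for the converse direction).

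For the forward direction, suppose $L(H)$ has a 2-factor $F=C_1\cup\cdots\cup C_k$. Since $F$ covers every vertex of $L(H)$, the vertex sets $V(C_i)$ partition $E(H)$ into $k$ edge-disjoint subgraphs $G_i$ of $H$. For each $i$ I would extract a $D_i$ that is either a star $K_{1,m}$ (with $m\geq 3$) or a closed trail dominating the remaining edges of $V(C_i)$. Traverse $C_i=e_1e_2\cdots e_m$ cyclically and, for each $j$, let $v_j$ denote the unique vertex in $e_j\cap e_{j+1}$. If all $v_j$ equal a single vertex $v$, then every edge of $V(C_i)$ is incident to $v$ and $V(C_i)$ is a star at $v$ with $\geq 3$ edges; take $D_i$ to be this star. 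Otherwise, the indices $j$ with $v_{j-1}\neq v_j$ yield a closed walk in $H$ using distinct edges of $V(C_i)$, which after cyclic contraction is a closed trail $T$; each remaining edge $e_j$ (with $v_{j-1}=v_j$) has the endpoint $v_{j-1}$ on $T$, so is dominated by $T$. Set $D_i=T$. Edge-disjointness of the $D_i$'s is inherited from the partition, so $\{D_1,\ldots,D_k\}$ is a dominating system of cardinality $k$.

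For the converse, let $\mathcal{D}=\{D_1,\ldots,D_k\}$ be a dominating system. For each closed-trail $D_i=x_0e_1'x_1e_2'\cdots e_s'x_0$, the cyclic sequence $e_1'e_2'\cdots e_s'$ is a cycle $C_i$ in $L(H)$ of length $s$ since consecutive pairs share the trail vertex between them. For each star $D_i\cong K_{1,m}$ with $m\geq 3$, the edges of $D_i$ form a clique $K_m$ in $L(H)$, and we take a Hamilton cycle $C_i$ of this clique. These cycles are vertex-disjoint in $L(H)$ because the $D_i$'s are edge-disjoint in $H$. Finally, for each edge $e\in E(H)\setminus\bigcup_i E(D_i)$, the dominating condition gives an endpoint $u$ of $e$ on some closed trail $D_i$; choose two edges $e_p',e_{p+1}'$ of $D_i$ consecutive at $u$ in the trail ordering and insert all not-yet-placed edges at $u$ in a block between $e_p'$ and $e_{p+1}'$ in $C_i$. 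Since all these edges and $e_p',e_{p+1}'$ pairwise share $u$, the insertion yields a longer cycle in $L(H)$. After doing this for every uninserted edge we obtain a 2-factor of $L(H)$ with exactly $k$ cycles.

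The main obstacle is the forward direction: justifying that $V(C_i)$ decomposes canonically into a closed trail plus dominated edges (or is itself a star). The subtlety is that a cycle in $L(H)$ need not be a closed trail in $H$; it can repeatedly enter and leave a single vertex, producing ``pendant'' edges whose two $L(H)$-neighbors share one common endpoint. Rigorously showing that contracting runs of constant $v_j$ in the sequence $v_1,v_2,\ldots,v_m$ yields a genuine closed trail (and not a more degenerate object) requires a careful case analysis: at each ``pendant'' position $j$ with $v_{j-1}=v_j$ one checks that skipping $e_j$ preserves the $L(H)$-adjacency of $e_{j-1}$ and $e_{j+1}$ at this shared vertex, so the contraction process is well-defined and terminates in a closed trail whose vertex set contains every $v_j$, hence dominates every edge of $V(C_i)$ that was dropped.
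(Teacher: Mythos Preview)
The paper does not give its own proof of Theorem~\ref{thm:k system}; it is quoted from Gould and Hynds~\cite{GH1999} and used as a black box, so there is nothing to compare against. Your argument is essentially the standard proof and is correct in outline. Two small points are worth tightening. In the forward direction, once you contract maximal runs of equal $v_j$'s in the cyclic sequence $v_1,\dots,v_m$, the resulting cyclic sequence $w_1,\dots,w_r$ has each step $w_iw_{i+1}$ realized by a specific $e_j$ from the original cycle; since the $e_j$'s are pairwise distinct edges of $H$, the contracted walk automatically has no repeated edges and is therefore a genuine closed trail (and this also forces $r\geq 3$ in a simple graph, so the trail is nondegenerate). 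In the converse direction your insertion procedure is fine, but you should say explicitly that you first \emph{assign} each edge of $E(H)\setminus\bigcup_i E(D_i)$ to one dominating vertex on one closed trail (and to one occurrence of that vertex along the trail, since a closed trail may revisit vertices), and only then perform all insertions; this guarantees each leftover edge is absorbed into exactly one cycle and the resulting subgraph of $L(H)$ is $2$-regular.
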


By Theorems~\ref{thm:closure hamilton}, \ref{thm:closure 2factor} and \ref{thm:k system}, in order to show our main result Theorem~\ref{thm:main}, it suffices to show the following.

\begin{theorem}\label{thm:preimage}
    Let $k$ be a positive integer.
    Let $H$ be a triangle-free graph and let $G$ be the line graph of $H$.
    If $\sigma_{k+1}(G)\geq |G|$ and every independent set $I$ of $G$ satisfies $|I|\leq \delta_G(I)-1$, then $H$ has a dominating system of cardinality at most $k$.
\end{theorem}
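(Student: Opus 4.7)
The plan is to argue by contradiction using a minimum dominating system. By Theorem~\ref{thm:general graph} applied to $G$, $G$ has a $2$-factor, so Theorem~\ref{thm:k system} yields at least one dominating system of $H$. Choose $\mathcal{D}=\{D_1,\dots,D_\ell\}$ among all such with $\ell$ minimum and, as a secondary criterion, with $\sum_i|E(D_i)|$ minimum. We assume $\ell\ge k+1$ and seek a contradiction.

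First, minimality forces structural rigidity on $\mathcal{D}$. Two closed trails of $\mathcal{D}$ sharing a vertex would fuse into one via an Eulerian circuit of their edge-disjoint union, so the closed trails of $\mathcal{D}$ are pairwise vertex-disjoint. Two stars sharing a center would combine into a single star, so star centers are pairwise distinct. A star whose center lies on a closed trail of $\mathcal{D}$ is redundant for domination, so this does not occur. Finally, triangle-freeness of $H$ forces every closed trail of $\mathcal{D}$ to have at least $4$ edges, while stars have at least $3$ edges by definition.

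Next, I would select for each $i\in\{1,\dots,k+1\}$ an edge $e_i\in E(D_i)$ so that $I=\{e_1,\dots,e_{k+1}\}$ is a matching in $H$, equivalently an independent set of $k+1$ vertices in $G$. Writing $e_i=u_iv_i$, using $|E(D_i)|\ge 3$ together with the structural constraints just established, a greedy or Hall-type argument produces such $I$. The hypotheses then yield $d_G(e_i)\ge |I|+1=k+2$, that is $d_H(u_i)+d_H(v_i)\ge k+4$, and simultaneously
\[
\sum_{i=1}^{k+1}\bigl(d_H(u_i)+d_H(v_i)-2\bigr) \;=\; \sum_{e\in I}d_G(e) \;\ge\; \sigma_{k+1}(G) \;\ge\; |E(H)|.
\]
Hence the endpoints $V(I)=\bigcup_i\{u_i,v_i\}$ are incident to essentially every edge of $H$.

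The final step converts this surplus into a contradiction with the minimality of $\ell$. From the inequality above, some endpoint of some $e_i$ must be incident to an edge of $H$ that either lies in, or is only dominated by, some other component $D_j$; using this ``connector'' edge I would combine $D_i$ and $D_j$ into a single closed trail (or star), contradicting the minimality of $\ell$. The main obstacle is precisely this merging step: it requires a careful case analysis across the trail/trail, trail/star, and star/star cases, verification that the combined subgraph is Eulerian on its support, and a check that the newly-used edges were previously only dominated and never used in $\mathcal{D}$. Triangle-freeness of $H$ is essential throughout, since it rules out the short cycles that would otherwise obstruct both the Eulerian extension and the matching selection in the preceding step.
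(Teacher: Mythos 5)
Your proposal has the right opening moves (minimum dominating system, the structural facts about vertex-disjoint trails and distinct star centers off the trails, and the degree-sum count showing the chosen matching's endpoints absorb almost all of $E(H)$), but two of its steps are genuine gaps rather than routine details. First, the existence of a matching containing one edge from each $D_i$ is exactly the hard point, not something a ``greedy or Hall-type argument'' delivers: star leaves may lie on closed trails or be shared among many stars, and when the relevant bipartite graph (star centers versus their private leaves) violates Hall's condition, no such selection exists. The paper's proof splits on precisely this dichotomy; in the deficient case it does not produce a one-edge-per-component matching at all, but instead either reassembles the stars around the deficient neighborhood (getting a smaller system) or, in the remaining subcase, builds a different matching $N=\{xt\}\cup\{v\varphi(v)\}$ around a single star edge $xt$ and contradicts the hypothesis $|I|\leq \delta_G(I)-1$. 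That hypothesis is used essentially only there (plus for the initial 2-factor), so a proof that never confronts the Hall-failure case is missing the place where the second hypothesis does its work — in your sketch it is invoked only to get $d_G(e_i)\geq k+2$, which you then never use.

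Second, your closing step — ``some endpoint is incident to an edge lying in or dominated by another component, and this connector edge lets me merge $D_i$ and $D_j$'' — does not work as stated. A single edge joining two vertex-disjoint closed trails cannot fuse them into one closed trail (its endpoints would have odd degree in the union), and an edge that is merely dominated by another component gives no merge at all. The paper converts the degree-sum surplus differently: the endpoints of the matching span at least $2\ell$ edges on $2\ell$ vertices, hence contain a cycle, and this contradicts a key lemma (Claim~\ref{claim:cycle}: every cycle of $H$ meets some closed trail of $\mathcal{D}$ in at least two edges), proved by symmetric-difference surgery. That lemma in turn needs the secondary extremal choice of \emph{maximizing} $|\bigcup_i E(D_i)|$ — the opposite of your secondary minimization — and it also forces the star edges of the matching to go to leaves outside $V(\mathcal{D}_1)\cup X$, which is why the Hall analysis is set up on the private-leaf bipartite graph in the first place. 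So the skeleton you propose cannot be completed along the lines you indicate without importing essentially all of the paper's case analysis.
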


\begin{proof}[Proof of Theorem~\ref{thm:main}]
    Let $k$ be a positive integer and let $G$ be a claw-free graph of order $n$ such that $\sigma_{k+1}(G)\geq |G|$ and every independent set $I$ of $G$ satisfies $|I|\leq \delta_G(I)-1$.
    Since the closure $\text{cl}(G)$ is obtained from $G$ by adding edges, $\text{cl}(G)$ satisfies $\sigma_{k+1}(\text{cl}(G))\geq n=|\text{cl}(G)|$ and every independent set $I'$ of $\text{cl}(G)$ satisfies $|I'|\leq \delta_G(I')-1\leq \delta_{\text{cl}(G)}(I')-1$.
    By Theorem~\ref{thm:closure hamilton}, there is a triangle-free graph $H$ such that $\text{cl}(G)=L(H)$.
    Applying Theorem~\ref{thm:preimage} to $H$, we obtain a dominating system of $H$ of cardinality at most $k$.
    Thus, by Theorems~\ref{thm:k system} and \ref{thm:closure 2factor}, $\text{cl}(G)=L(H)$ has a 2-factor with at most $k$ cycles and so is $G$.
\end{proof}

\section{Proof of Theorem~\ref{thm:preimage}}\label{sec:proof}

We fix a positive integer $k$.
Let $H$ be a triangle-free graph such that the line graph $G$ of $H$ satisfies $\sigma_{k+1}(G)=|G|\geq n$ and every independent set $I$ of $G$ satisfies $|I|\leq \delta_G(I)-1$.
Suppose that $H$ does not have a dominating system of cardinality at most $k$.

Since every independent set $I$ of $G$ satisfies $|I|\leq \delta_G(I)-1$, by Theorem~\ref{thm:general graph}, $G$ has a 2-factor, and hence $H$ has a dominating system.
Let $\mathcal{D}=\{D_1,D_2,\dots, D_\ell\}$ be a dominating system of $H$ so that 
\begin{enumerate}[label=(\arabic*)]
    \item the cardinality $\ell$ is as small as possible, and 
    \item subject to (1), $|\bigcup_{i=1}^\ell E(D_i)|$ is as large as possible.
\end{enumerate}
By our assumption on $H$, we have $\ell\geq k+1$.

Let $\mathcal{D}_1$ be the set of closed trails in $\mathcal{D}$, and let $\mathcal{D}_2$ be the set of stars in $\mathcal{D}$.
For $i=1,2$, let $V(\mathcal{D}_i)$ and $E(\mathcal{D}_i)$ respectively denote $\bigcup_{D\in \mathcal{D}_i}V(D)$ and $\bigcup_{D\in \mathcal{D}_i}E(D)$.
For each $D\in\mathcal{D}_2$, let $x_D$ be the center of $D$, and let $X=\{x_D\mid D\in\mathcal{D}_2\}$.
The following three can be easily shown from the minimality of $\ell$.
\begin{enumerate}[label=(\alph*)]
    \item\label{cond:ct ct} For any distinct closed trails $D$ and $D'$ in $\mathcal{D}_1$, $V(D)\cap V(D')=\emptyset$.
    \item\label{cond:star star} For any distinct stars $D$ and $D'$ in $\mathcal{D}_2$, $x_D\neq x_{D'}$.
    \item\label{cond:star ct} $V(\mathcal{D}_1)\cap X=\emptyset$.
\end{enumerate}
Furthermore, the following claim holds from the choice of $\mathcal{D}$.

\begin{claim}\label{claim:cycle}
    For any cycle $C$ of $H$, there is a closed trail $D\in \mathcal{D}_1$ such that $|E(C)\cap E(D)|\geq 2$.
    In particular, $|E(C)\cap E(\mathcal{D}_1)|\geq 2$ for any cycle $C$ of $H$.
\end{claim}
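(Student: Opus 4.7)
The plan is to argue by contradiction: assume there is a cycle $C$ in $H$ with $|E(C)\cap E(D)|\le 1$ for every $D\in\mathcal{D}_1$, and construct from $\mathcal{D}$ a new dominating system that either has strictly fewer than $\ell$ members or has $\ell$ members but covers strictly more edges; this contradicts (1) or (2). I would set $\mathcal{D}_1^E=\{D\in\mathcal{D}_1:E(D)\cap E(C)\neq\emptyset\}$ with $k^*=|\mathcal{D}_1^E|$, denote by $e_D$ the unique edge in $E(D)\cap E(C)$ for each $D\in\mathcal{D}_1^E$, and let $\mathcal{D}_2^E=\{D'\in\mathcal{D}_2:E(D')\cap E(C)\neq\emptyset\}$. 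Observe that the center of any star in $\mathcal{D}_2^E$ lies on $V(C)$ and the star shares at most two edges with $C$.

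Suppose first that $k^*=0$, so $E(C)\cap E(\mathcal{D}_1)=\emptyset$. If some $D\in\mathcal{D}_1$ meets $V(C)$, then $D\cup C$ is a connected even subgraph, hence a closed trail; replacing $D$ by $D\cup C$ and deleting the stars of $\mathcal{D}_2^E$ (whose centers lie on the closed trail $D\cup C$) yields a dominating system of cardinality $\ell-|\mathcal{D}_2^E|$, contradicting (1) when $|\mathcal{D}_2^E|\ge 1$ and contradicting (2) when $|\mathcal{D}_2^E|=0$ (the edge count grows by $|E(C)|$). If no $D\in\mathcal{D}_1$ meets $V(C)$, then every edge of $C$ must be covered by a star centered on $V(C)$; since each such star covers at most two $C$-edges and $|E(C)|\ge 4$ (as $H$ is triangle-free), $|\mathcal{D}_2^E|\ge 2$, and replacing $\mathcal{D}_2^E$ by the single closed trail $C$ gives a dominating system of cardinality at most $\ell-1$.

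The main case is $k^*\ge 1$. Here I would introduce
\[
D^*\;:=\;E(C)\;\triangle\;\bigcup_{D\in\mathcal{D}_1^E}E(D)
\]
and prove that $D^*$ is a single closed trail. Evenness of all vertex degrees in $D^*$ follows from a direct local computation at the endpoints of each shared edge. For connectedness I would set up the auxiliary multigraph $M$ on vertex set $\mathcal{D}_1^E$ whose edges are the $k^*$ arcs of $C-\{e_D:D\in\mathcal{D}_1^E\}$: reading the shared edges in cyclic order around $C$ as $e_{D^{(1)}},\dots,e_{D^{(k^*)}}$, the $j$-th arc joins a vertex of $V(D^{(j)})$ to one of $V(D^{(j+1)})$ (indices mod $k^*$), so $M$ is a single cycle (a loop if $k^*=1$, a pair of parallel edges if $k^*=2$). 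Condition (a) guarantees that these arcs have positive length, since two consecutive shared edges sharing a vertex would place that vertex in two distinct members of $\mathcal{D}_1$. Each $D-e_D$ is connected because no edge of a connected Eulerian graph is a bridge, and so the connectedness of $M$ lifts to connectedness of $D^*$.

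Having established that $D^*$ is a closed trail, I would replace $\mathcal{D}_1^E\cup\mathcal{D}_2^E$ by $\{D^*\}$; a routine verification shows that this yields a dominating system of $H$, since any edge removed from the union is either contained in $E(D^*)$ or has an endpoint in $V(D^*)\supseteq V(C)\cup\bigcup_{D\in\mathcal{D}_1^E}V(D)$. The new cardinality is $\ell-k^*-|\mathcal{D}_2^E|+1$; if $k^*+|\mathcal{D}_2^E|\ge 2$ this contradicts (1), and if $k^*=1$ and $|\mathcal{D}_2^E|=0$ the total number of edges increases by $|E(C)|-2\ge 2$, contradicting (2). The main obstacle I foresee is the connectedness of $D^*$ via the cycle structure of $M$; once that structural step is in place, the remainder amounts to verifying the dominating system axioms and the edge counts.
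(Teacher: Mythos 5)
Your proposal is correct and follows essentially the same strategy as the paper: assume a cycle $C$ meeting each closed trail in at most one edge, take the symmetric difference of $E(C)$ with the intersecting closed trails to form a single new closed trail, swap it in for the absorbed members, and contradict either the minimality of the cardinality (1) or the maximality of the edge count (2). The paper performs this in one unified step (absorbing all closed trails meeting $V(C)$ and all stars centered on $V(C)$ simultaneously), whereas you split into the cases $k^*=0$ and $k^*\geq 1$, but the underlying idea and the use of (1) and (2) are the same.
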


\begin{proof}
    Assume to the contrary that $H$ has a cycle $C$ such that $|E(C)\cap E(D)|\leq 1$ for every closed trail $D\in\mathcal{D}_1$.
    Let $\mathcal{D}_C=\{D\in\mathcal{D}_1\mid V(C)\cap V(D)\neq \emptyset\}\cup \{D\in\mathcal{D}_2\mid x_D\in V(C)\}$.
    Since $\mathcal{D}$ is a dominating system, we have $|\mathcal{D}_C|\geq 1$.
    Let $C'$ be a subgraph of $H$ induced by the symmetric difference of $E(C)$ and $\bigcup_{D\in \mathcal{D}_C\cap \mathcal{D}_1}E(D)$.
    Since $|E(C)\cap E(D)|\leq 1$ for each $D\in \mathcal{D}_C\cap \mathcal{D}_1$ and closed trails in $\mathcal{D}_C\cap \mathcal{D}_1$ are pairwise vertex-disjoint, $C'$ is connected and Eulerian.
    Hence, $C'$ is a closed trail of $H$ that is edge-disjoint from subgraphs in $\mathcal{D}\setminus \mathcal{D}_C$.
    Let $\mathcal{D}'=(\mathcal{D}\setminus \mathcal{D}_C)\cup \{C'\}$.
    For each $D\in\mathcal{D}_C\cap \mathcal{D}_1$, we have $V(D)\subseteq V(C')$ and hence all the edges contained or dominated by $D$ are contained or dominated by $C'$.
    For each $D\in \mathcal{D}_C\cap \mathcal{D}_2$, since $x_D\in V(C)\subseteq V(C')$, all the edges of $D$ are dominated by $C'$.
    Thus, $\mathcal{D}'$ is a dominating system of $H$.
    If $|\mathcal{D}_C|\geq 2$, then $\mathcal{D}'$ has the cardinality less than $\ell$, a contradiction by the minimality of $\ell$.
    Otherwise, there is a closed trail $D^*\in\mathcal{D}_1$ such that $\mathcal{D}_C=\{D^*\}$.
    Since $|E(C)\cap E(D^*)|\leq 1$ and $|E(C)\cap E(D)|=0$ for every $D\in \mathcal{D}\setminus\{D^*\}$, it follows that
    \begin{align*}
        \Bigl|\bigcup_{D\in \mathcal{D}'}E(D)\Bigr|=\Bigl|\bigcup_{D\in\mathcal{D}}E(D)\Bigr|-|E(C)\cap E(D^*)|+|E(C)\setminus E(D^*)|>\Bigl|\bigcup_{D\in\mathcal{D}}E(D)\Bigr|,
    \end{align*}
    a contradiction by the maximality of $|\bigcup_{i=1}^\ell E(D_i)|$.
\end{proof}
    
For each $D\in\mathcal{D}_2$, let $T(D)$ be the set of leaves of $D$ that are not contained in $V(\mathcal{D}_1)\cup X$.
Let $T=\bigcup_{D\in \mathcal{D}_2}T(D)$, and let $F$ be the subgraph of $H$ such that $V(F)=X\cup T$ and $E(F)=\bigcup_{D\in \mathcal{D}_2} \{x_Dt\mid t\in T(D)\}$.
Note that $F$ is a bipartite graph with partite sets $X$ and $T$.
Since $\mathcal{D}$ dominates $H$, $T$ is an independent set of $H$.
The proof is divided into two cases depending on whether $F$ contains a matching that covers $X$ or not.

\vspace{\baselineskip}
\noindent
\textit{Case 1:} $F$ contains a matching that covers $X$.

Let $M$ be a matching of $F$ that covers $X$.
By choosing arbitrary one edge from each closed trail in $\mathcal{D}_1$, we extend $M$ to $M'\subseteq E(H)$ such that $|M'\cap E(D)|=1$ for every $D\in \mathcal{D}$.
By \ref{cond:ct ct}, \ref{cond:star star} and \ref{cond:star ct}, $M'$ is a matching of $H$, and hence $M'$ is an independent set of $G$ of order $\ell\geq k+1$.
Let $F'$ be the subgraph of $H$ induced by all the end vertices of $M'$.
Since each edge in $E(H)\setminus E(F')$ has at most one neighbor in $M'$ in $G$ and each edge in $E(F')\setminus M'$ has two neighbors in $M'$ in $G$, the assumption of $\sigma_{k+1}(G)\geq n$ implies that
\begin{align*}
    n\leq \sum_{e\in M'}d_G(e)
    \leq |E(H)\setminus E(F')|+2|E(F')\setminus M'|
    =|E(H)|-|E(F')|-2|M'|
    =n+|E(F')|-2\ell,
\end{align*}
and thus $|E(F')|\geq 2\ell$.
Since $|F'|=2|M'|=2\ell$, $F'$ contains a cycle $C$.
By \ref{cond:ct ct}, \ref{cond:star ct}, and the fact $V(\mathcal{D}_1)\cap T=\emptyset$, no edge of $E(F')\setminus M'$ belongs to $E(\mathcal{D}_1)$.
This implies that $|E(C)\cap E(D)|\leq |M'\cap E(D)|=1$ for every $D\in\mathcal{D}_1$, a contradiction by Claim~\ref{claim:cycle}.

\vspace{\baselineskip}
\noindent
\textit{Case 2:} $F$ does not contain a matching that covers $X$.

By Hall's Theorem, there is a subset $S\subseteq X$ such that $|N_F(S)|<|S|$.
Choose the smallest $S$.
Let $\mathcal{D}_S=\{D\in\mathcal{D}_2\mid x_D\in S\}$.

\begin{claim}\label{claim:S independent}
    $S$ is an independent set of $H$.
\end{claim}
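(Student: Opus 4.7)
The plan is to argue by contradiction: suppose $S$ is not independent in $H$, so there exist distinct $D, D' \in \mathcal{D}_S$ with $x_D x_{D'} \in E(H)$. First I would locate this edge inside $\mathcal{D}$. By \ref{cond:star ct}, neither $x_D$ nor $x_{D'}$ lies in $V(\mathcal{D}_1)$, so $x_D x_{D'}$ cannot be merely dominated in the dominating-system sense; it must lie in some $D_i \in \mathcal{D}$. Since $V(\mathcal{D}_1) \cap X = \emptyset$, $D_i$ cannot be a closed trail, so $D_i$ is a star whose center is one of $x_D, x_{D'}$; by \ref{cond:star star} this forces $D_i \in \{D, D'\}$. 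Without loss of generality $x_D x_{D'} \in E(D)$, so $x_{D'}$ is a leaf of $D$.

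Second, I would use this structural information to construct a new dominating system $\mathcal{D}^*$ that either strictly reduces the cardinality (contradicting (1)) or keeps the cardinality but strictly increases $|\bigcup_i E(D_i)|$ (contradicting (2)). The natural idea is to merge $D$ and $D'$ through the common edge $x_D x_{D'}$: when $|E(D)| \geq 4$, one can replace $D$ by the still-valid star $D - x_D x_{D'}$ and attach $x_D$ as a new leaf of $D'$, thereby freeing us to either drop $D$ or reshuffle edges to produce the needed extremal contradiction; when $|E(D)| = 3$, the star $D - x_D x_{D'}$ has only two edges and is not allowed, so $D$ itself must be dissolved and its other two edges $x_D u_1, x_D u_2$ absorbed -- either by a closed trail in $\mathcal{D}_1$ (when $u_i \in V(\mathcal{D}_1)$) or by another star (when $u_i \in X \cup T$). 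Triangle-freeness of $H$ plays a crucial role here: since $N_H(x_D) \cap N_H(x_{D'}) = \emptyset$, the leaf sets of $D$ (minus $x_{D'}$) and of $D'$ are disjoint and pairwise non-adjacent, which keeps the local surgery well-controlled and prevents accidental re-creation of structures already removed.

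The main obstacle is executing this merge uniformly across all configurations, particularly the minimum-size degenerate cases where shrinking a star would violate the ``at least three edges'' condition. Verifying that the reconstructed $\mathcal{D}^*$ remains a dominating system is routine once the local structure is understood, but requires tracking precisely which edges survive in a subgraph of $\mathcal{D}^*$ and which instead become dominated by a closed trail of $\mathcal{D}_1$. The pigeonhole consequence $|\bigcup_{D \in \mathcal{D}_S} T(D)| = |N_F(S)| < |S| = |\mathcal{D}_S|$, together with the minimality of $S$, may also be invoked to guarantee that enough leaves of stars in $\mathcal{D}_S$ lie outside $T$, i.e., in $V(\mathcal{D}_1) \cup X$, which is what makes the required reconstruction feasible.
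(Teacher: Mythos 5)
Your opening step (locating $x_Dx_{D'}$ in $E(D)\cup E(D')$) is correct and matches the paper, but the engine of your argument --- merging or dissolving the two stars to contradict the extremal choice of $\mathcal{D}$ --- has a genuine gap, and the configuration you want to rule out is in fact compatible with that extremal choice. If $|E(D)|\geq 4$, shifting the edge $x_Dx_{D'}$ from $D$ to $D'$ changes neither the cardinality of the system nor $|\bigcup_i E(D_i)|$, so neither (1) nor (2) is violated; you never exhibit an actual contradiction. If $|E(D)|=3$, dissolving $D$ requires every remaining edge $x_Du$ to be dominated by a closed trail or absorbed into a star, and this fails whenever $u\in T(D)$: then $u\notin V(\mathcal{D}_1)\cup X$ and $x_D\notin V(\mathcal{D}_1)$, so $x_Du$ can be neither dominated nor placed in an existing star, and creating a new star centered at $u$ would need $|N_F(u)\cap S|\geq 3$, which is exactly the Case 2-1 hypothesis and is not available here. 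Moreover, the minimality of $S$ works against you rather than for you: applied to the proper subset $\{x_D\}\subsetneq S$ it gives $|N_F(x_D)|=|T(D)|\geq 1$, so such bad leaves always exist; your hope that minimality guarantees enough leaves outside $T$ is backwards. Finally, your side remark that the leaves of $D$ and of $D'$ are pairwise non-adjacent does not follow from triangle-freeness (adjacent leaves create a $4$-cycle, not a triangle); excluding such cycles needs Claim~\ref{claim:cycle}, as in the paper's Claim~\ref{claim:edge between uw}.

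The paper's proof takes a different and much shorter route, doing no surgery on $\mathcal{D}$ at all. Let $S_1\subseteq S$ be the set of vertices joined to $x_D$ by a path of $F$ and $S_2=S\setminus S_1$. If $x_{D'}\in S_1$, then an $x_Dx_{D'}$-path of $F$ together with the edge $x_Dx_{D'}$ is a cycle all of whose edges lie in $E(\mathcal{D}_2)$, contradicting Claim~\ref{claim:cycle}; hence $x_{D'}\in S_2$ and both parts are nonempty. Since $N_F(S_1)\cap N_F(S_2)=\emptyset$ and $N_F(S_1)\cup N_F(S_2)=N_F(S)$, the inequality $|N_F(S)|<|S|$ forces $|N_F(S_i)|<|S_i|$ for some $i$, contradicting the minimal choice of the Hall violator $S$. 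So the adjacency of two centers is refuted by the minimality of $S$ (plus Claim~\ref{claim:cycle}), not by the extremal choice of $\mathcal{D}$; to repair your write-up you would essentially have to switch to this argument.
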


\begin{proof}
    Suppose that there are two stars $D$ and $D'$ in $\mathcal{D}_S$ such that $x_{D}x_{D'}\in E(H)$.
    Note that $x_{D}x_{D'}\in E(D)\cup E(D')$ by \ref{cond:star star} and \ref{cond:star ct}. 
    Let $S_1$ be a set of vertices $x\in S$ such that there is an $x_Dx$-path of $F$, and let $S_2=S\setminus S_1$.
    If there is an $x_{D}x_{D'}$-path $P$ of $F$, then $P+x_{D}x_{D'}$ is a cycle of $H$ satisfying $E(P+x_{D}x_{D'})\subseteq E(\mathcal{D}_2)$, which contradicts Claim~\ref{claim:cycle}.
    Thus, $x_{D'}\in S_2$ and hence both $S_1$ and $S_2$ are nonempty.
    Since $N_F(S_1)\cap N_F(S_2)=\emptyset$ and $N_F(S_1)\cup N_F(S_2)=N_F(S)$, we have
    \begin{align*}
        |N_F(S_1)|+|N_F(S_2)|=|N_F(S)|<|S|=|S_1|+|S_2|.
    \end{align*}
    Therefore, either $|N_F(S_1)|<|S_1|$ or $|N_F(S_2)|<|S_2|$ is satisfied, a contradiction to the minimality of the order of $S$.
\end{proof}

In the rest of the proof, we consider two cases depending on whether every vertex $t\in N_F(S)$ satisfies $|N_F(t)\cap S|\geq 3$ or not.

\vspace{\baselineskip}
\noindent
\textit{Case 2-1}: Every vertex $t\in N_F(S)$ satisfies $|N_F(t)\cap S|\geq 3$.

For each $t\in N_F(S)$, let $D_t$ be a star induced by the edges of $F$ joining $t$ and $N_F(t)\cap S$.
By the assumption of this case, each $D_t$ is a star with at least three edges.
For each $D\in \mathcal{D}_2\setminus \mathcal{D}_S$, let $\overline{D}$ be a star obtained from $D$ by adding all the edges that join $x_D$ and $S$.
Let $\mathcal{D}'=\mathcal{D}_1\cup \{\overline{D}\mid D\in  \mathcal{D}_2\setminus \mathcal{D}_S\}\cup \{D_t\mid t\in N_F(S)\}$.

By Claim~\ref{claim:S independent}, each edge of $\bigcup_{D\in \mathcal{D}_S}E(D)$ is either contained in $F$, incident to a closed trail in $\mathcal{D}_1$, or incident to the center of a star in $\mathcal{D}_2\setminus \mathcal{D}_S$.
If $e\in \bigcup_{D\in \mathcal{D}_S}E(D)$ is an edge of $F$, then $e\in E(D_t)$ for some $t\in N_F(S)$.
If $e\in \bigcup_{D\in \mathcal{D}_S}E(D)$ is incident to a closed trail $D'\in\mathcal{D}_1$, then $e$ is dominated by $D'$.
If $e\in \bigcup_{D\in\mathcal{D}_S}E(D)$ is incident to the center of a star $D'\in\mathcal{D}_2\setminus\mathcal{D}_S$, then $e\in E(\overline{D'})$ by the definition of $\overline{D'}$.
Combining these, we conclude that each edge of $\bigcup_{D\in \mathcal{D}_S}E(D)$ is either contained in a star in $\mathcal{D}'$ or dominated by a closed trail in $\mathcal{D}'$, and hence $\mathcal{D}'$ is a dominating system of $H$.
    
On the other hand, since $|N_F(S)|<|S|$, we have $|\mathcal{D}'|= |\mathcal{D}|-|S|+|N_F(S)|<|\mathcal{D}|$, a contradiction to the minimality of $\ell$.

\vspace{\baselineskip}
\noindent
\textit{Case 2-2:} There is a vertex $t\in N_F(S)$ such that $|N_F(t)\cap S|\leq 2$.

Let $D$ be a star in $\mathcal{D}_S$ such that $x:=x_D\in N_F(t)$.
Let $U:=\{u_1,u_2,\dots ,u_p\}=N_H(x)\setminus\{t\}$ and let $W:=\{w_1,w_2,\dots ,w_q\}=N_H(t)\setminus S$.
Since $H$ is triangle-free, $U\cap W=\emptyset$.

\begin{claim}\label{claim:u degree}
    For every $u_i\in U$, $d_H(u_i)\geq 2$.
\end{claim}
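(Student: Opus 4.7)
The plan is to argue by contradiction; suppose some $u_i\in U$ satisfies $d_H(u_i)=1$. Since $u_i\in N_H(x)$ and $u_i\neq t$, the unique neighbor of $u_i$ in $H$ is $x$, so $xu_i$ is a pendant edge of $H$.

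The first step is to locate where $xu_i$ sits in the dominating system $\mathcal{D}$. By property \ref{cond:star ct} we have $x\notin V(\mathcal{D}_1)$; also, since $u_i$ has $H$-degree $1$, it cannot lie on any closed trail (every vertex of a closed trail has degree at least $2$). Hence neither endpoint of $xu_i$ lies on a closed trail, so $xu_i$ cannot be dominated by any element of $\mathcal{D}_1$ and must be contained in some star of $\mathcal{D}_2$. Since $u_i$ has degree $1$ it cannot itself be a star center (centers have degree $\geq 3$), so $u_i$ is a leaf and the star containing $xu_i$ is centered at $x$; by property \ref{cond:star star} this star is exactly $D$. Consequently $u_i\in T(D)\subseteq T$ and $xu_i\in E(F)$.

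The second step is to exploit the extremal choice of $\mathcal{D}$ to produce a contradiction. The natural move is to build a new collection $\mathcal{D}'$ of strictly smaller cardinality (contradicting~(1)) or of the same cardinality but with strictly more edges (contradicting~(2)). The intended construction is to absorb $x$ into a closed trail: concatenate the edge $xt$ with a trail from $t$ into (or forming) a closed trail of $\mathcal{D}_1$, using a neighbor $w\in W$ as the next vertex after $t$, and possibly merging with an already-existing closed trail in $\mathcal{D}_1$ incident to $W$ or to a leaf of $D$ lying in $V(\mathcal{D}_1)$. Once $x$ lies on a closed trail of $\mathcal{D}'$, every edge incident to $x$ (including the pendant edge $xu_i$) is automatically dominated, so the star $D$ may be deleted from the collection and the cardinality drops by one.

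The main obstacle is constructing this absorbing trail rigorously: one must verify that the new collection is edge-disjoint, that every former edge of $D$ is still contained in or dominated by some element of $\mathcal{D}'$, and that the resulting cardinality truly decreases (rather than staying the same). Here the triangle-free hypothesis on $H$, which yields $U\cap W=\emptyset$ and forbids short shortcuts that would spoil the trail, together with the pendant character of $u_i$, which guarantees that $xu_i$ is not already dominated elsewhere, are what make the bookkeeping close.
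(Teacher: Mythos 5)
Your first step is fine and matches the paper's setup: since $d_H(u_i)=1$, the vertex $u_i$ lies on no closed trail, the edge $xu_i$ cannot be dominated by $\mathcal{D}_1$ (as $x\notin V(\mathcal{D}_1)$ by \ref{cond:star ct}), so it lies in a star which must be centered at $x$, i.e.\ in $D$ by \ref{cond:star star}; hence $u_i\in T(D)$ and $N_F(u_i)=\{x\}$. But your second step — the part that is supposed to deliver the contradiction — is only a sketch, and the sketched construction does not go through. To ``absorb $x$ into a closed trail'' you need a closed trail through $x$, which must use two edges incident with $x$ (say $xt$ and some $xu_j$) and then return from $u_j$ to $t$ along an edge-disjoint walk; nothing in the hypotheses guarantees such a walk exists ($x$ may be a cut vertex whose removal separates $t$ from all of $U$, exactly as with the cut vertices $v_i$ in the sharpness example $G_k$ of Section~\ref{subsec:main result}). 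You acknowledge this yourself by calling the construction ``the main obstacle,'' but that obstacle is the whole proof: without it there is no contradiction, so the argument has a genuine gap.

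The paper closes the claim by a different and much shorter mechanism, which does not touch the dominating system at all: it uses the minimality of the Hall violator $S$ (chosen as a smallest set with $|N_F(S)|<|S|$). Since $d_H(u_i)=1$ forces $N_F(u_i)=\{x\}$, deleting $x$ from $S$ removes $u_i$ from the $F$-neighborhood, so
\begin{align*}
    |N_F(S\setminus\{x\})|\leq |N_F(S)|-1<|S|-1=|S\setminus\{x\}|,
\end{align*}
and $S\setminus\{x\}\neq\emptyset$ because $t\in N_F(S)$ forces $|S|\geq 2$; this contradicts the choice of $S$. If you want to salvage your approach, you would have to prove that the absorbing closed trail exists under the standing assumptions, which appears to require essentially new arguments; switching the source of the contradiction from the extremality of $\mathcal{D}$ to the minimality of $S$ is the intended (and far simpler) route.
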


\begin{proof}
    Assume to the contrary that $d_H(u_i)=1$ for some $u_i\in U$.
    Then it follows that $u_i\in T(D)\subseteq T$ and $N_F(u_i)=\{x\}$, and hence $|N_F(S\setminus\{x\})|\leq |N_F(S)\setminus\{u_i\}|=|N_F(S)|-1<|S|-1=|S\setminus\{x\}|$, a contradiction by the minimality of the order of $S$.
\end{proof}

Since $t\notin V(\mathcal{D}_1)\cup X$, for every $j\in \{1,2,\dots , q\}$, the edge $tw_j$ is either contained in a star of $\mathcal{D}_2$ with the center $w_j$ or dominated by a closed trail of $\mathcal{D}_1$, which implies that $d_H(w_j)\geq 2$.

For each closed trail $D'\in \mathcal{D}_1$, we fix an orientation of $D'$ and write $\overrightarrow{D'}$ with its orientation.
We define $\varphi(v)\in V(H)$ for each $v\in U\cup W$ as follows.
\begin{itemize}
    \item If $v$ is a vertex of a closed trail $\overrightarrow{D'}$ of $\mathcal{D}_1$, then let $\varphi(v)$ be the vertex that follows $v$ along $\overrightarrow{D'}$.
    \item Otherwise, since $d_H(v)\geq 2$, let $\varphi(v)$ be an arbitrary vertex in $N_H(v)$ distinct from $x$ or $t$. 
\end{itemize}
We use the notation $\varphi(U):=\{\varphi(u_i)\mid 1\leq i\leq p\}$ and $\varphi(W):=\{\varphi(w_j)\mid 1\leq j\leq q\}$.
Since $H$ is triangle-free, it follows that $\{t,x\}\cap (\varphi(U)\cup \varphi(W))=\emptyset$, $U\cap \varphi(U)=\emptyset$, and $W\cap \varphi(W)=\emptyset$.

\begin{claim}\label{claim:edge between uw}
    There is no edge between $U$ and $W$.
    In particular, $U\cap \varphi(W)=W\cap \varphi(U)=\emptyset$.
\end{claim}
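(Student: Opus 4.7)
The plan is to argue by contradiction: suppose there exist $u_i\in U$ and $w_j\in W$ with $u_iw_j\in E(H)$. I would first produce a 4-cycle and then exploit Claim~\ref{claim:cycle} together with property \ref{cond:star ct} to force a contradiction. Concretely, $x$ is adjacent to $u_i$ (since $u_i\in N_H(x)$), $u_i$ is adjacent to $w_j$ by assumption, $w_j$ is adjacent to $t$ (since $w_j\in N_H(t)$), and $t$ is adjacent to $x$ (since $x\in N_F(t)\subseteq N_H(t)$). The four vertices $x,u_i,w_j,t$ are pairwise distinct: $U\cap W=\emptyset$ because $H$ is triangle-free (an argument already used), $u_i\neq t$ and $w_j\neq x$ by the definitions of $U$ and $W$, and $x\neq t$ since $xt\in E(H)$. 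So $C:=xu_iw_jt x$ is a genuine 4-cycle of $H$.

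Now apply Claim~\ref{claim:cycle} to $C$: there is some $D'\in\mathcal{D}_1$ with $|E(C)\cap E(D')|\geq 2$. However, both edges $xu_i$ and $xt$ are incident to $x\in X$, and by \ref{cond:star ct} we have $x\notin V(\mathcal{D}_1)$, so neither $xu_i$ nor $xt$ lies in $E(\mathcal{D}_1)$. Consequently the two edges of $E(C)\cap E(\mathcal{D}_1)$ must be exactly $u_iw_j$ and $w_jt$; since they share the vertex $w_j$ and closed trails in $\mathcal{D}_1$ are pairwise vertex-disjoint by \ref{cond:ct ct}, they lie in the same closed trail $D'\in\mathcal{D}_1$. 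But then $t\in V(D')\subseteq V(\mathcal{D}_1)$, contradicting $t\in T$, which by definition is disjoint from $V(\mathcal{D}_1)$. This establishes the main statement that no edge joins $U$ and $W$.

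For the ``in particular'' part, observe that $\varphi$ always maps a vertex to one of its $H$-neighbors: in the closed-trail case $\varphi(v)$ is the successor of $v$ along $\overrightarrow{D'}$, hence adjacent to $v$, and in the other case $\varphi(v)\in N_H(v)$ by construction. Therefore any vertex in $U\cap\varphi(W)$ would be some $u_i\in U$ adjacent to a $w_j\in W$, and any vertex in $W\cap\varphi(U)$ would be some $w_j\in W$ adjacent to some $u_i\in U$; either way we would obtain a $UW$-edge, contradicting what was just proved. The key step I expect to be the crux of the argument is recognizing the 4-cycle and observing that the two edges incident to $x$ are automatically excluded from $E(\mathcal{D}_1)$, which forces Claim~\ref{claim:cycle} to place the other two edges into a common closed trail and drag $t$ into $V(\mathcal{D}_1)$.
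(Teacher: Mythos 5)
Your proof is correct and takes essentially the same route as the paper: the same 4-cycle $xu_iw_jtx$, Claim~\ref{claim:cycle}, and property \ref{cond:star ct}. The only cosmetic difference is that the paper uses $t\notin V(\mathcal{D}_1)$ up front to exclude $tw_j$ as well, so that $|E(C)\cap E(\mathcal{D}_1)|\leq 1$ contradicts Claim~\ref{claim:cycle} directly, whereas you exclude only the edges at $x$ and then derive the same contradiction by forcing $t$ into $V(\mathcal{D}_1)$.
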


\begin{proof}
    Assume to the contrary that there is an edge between $U$ and $W$.
    Without loss of generality, we may assume that $u_1w_1\in E(H)$.
    Then $C=txu_1w_1t$ is a 4-cycle of $H$.
    Since $x, t\notin V(\mathcal{D}_1)$ by the definition, it follows that $xu_1, xt, tw_1\notin E(\mathcal{D}_1)$, and hence $|E(C)\cap E(\mathcal{D}_1)|\leq 1$, a contradiction by Claim~\ref{claim:cycle}.
\end{proof}

Therefore, $\varphi$ is a mapping from $U\cup W$ to $V(H)\setminus (U\cup W)$.

\begin{claim}\label{claim:phi inj}
    $\varphi$ is an injection from $U\cup W$ to $V(H)\setminus (U\cup W)$.
\end{claim}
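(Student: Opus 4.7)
The plan is to prove Claim~\ref{claim:phi inj} by contradiction: assume $\varphi(v_1)=\varphi(v_2)=y$ for some distinct $v_1,v_2\in U\cup W$ and modify $\mathcal{D}$ to violate either (1) or (2). The argument splits into three cases based on whether each of $v_1, v_2$ lies in $U$ or in $W$. In each case I form a short cycle $C$ of $H$ through $v_1, y, v_2$ together with the appropriate element(s) of $\{x,t\}$: the $4$-cycle $xv_1yv_2x$ when both are in $U$, the $4$-cycle $tv_1yv_2t$ when both are in $W$, and the $5$-cycle $xv_1yv_2tx$ when one lies in each (using the edge $xt\in E(D)$). Since $x,t\notin V(\mathcal{D}_1)$ by \ref{cond:star ct}, no edge of $C$ incident to $x$ or $t$ belongs to $E(\mathcal{D}_1)$, so Claim~\ref{claim:cycle} applied to $C$ forces $v_1y, v_2y\in E(D')$ for a common closed trail $D'\in\mathcal{D}_1$, and in particular $v_1,v_2,y\in V(D')$.

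Next I construct $D''=D'\triangle C$, which swaps $\{v_1y,v_2y\}$ for the remaining edges of $C$. All vertex degrees stay even, and $D''$ is verified to be a single closed trail by tracing the Eulerian circuit $\overrightarrow{D'}$: its two distinct arrivals at $y$ split $D'$ into two closed sub-walks meeting at $y$, which are rejoined into one closed walk by the new short path through $x$ and/or $t$. The vertex set of $D''$ is then $V(D')\cup(\{x,t\}\cap V(C))$. In the cases where $x\in V(D'')$ (namely, both $v_i\in U$, or one in each of $U$ and $W$), every edge of the star $D$ with centre $x$ is dominated by the new closed trail $D''$, so $(\mathcal{D}\setminus\{D,D'\})\cup\{D''\}$ is a dominating system of cardinality $\ell-1$, contradicting the minimality of $\ell$.

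The main obstacle is the remaining case $v_1,v_2\in W$: then $V(D'')$ contains $t$ but not $x$, so the star $D$ is not automatically dominated, and the direct swap only preserves both the cardinality and the edge count. I plan to overcome this by exploiting that $t$ lies in a closed trail of the swapped system, together with the maximality condition (2): once $t$ belongs to a closed trail, the leaf $xt$ of $D$ becomes redundant, and the freed edge slot can either be refilled by augmenting $D''$ with an extra pair of edges incident to $x$ (strictly increasing $|\bigcup E(D_i)|$ and contradicting (2)), or it forces a violation of the minimality of the Hall-deficient set $S$ in the bipartite graph associated with the modified system (leveraging the assumption $|N_F(t)\cap S|\leq 2$ from the current case 2-2). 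Carefully organising this step-by-step improvement in this last subcase is the crux of the proof.
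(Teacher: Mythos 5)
Your first two configurations ($v_1,v_2\in U$, and one vertex in each of $U$ and $W$) coincide with the paper's argument: the paper likewise forms the short cycle $C=v_1Pv_2yv_1$ with $V(P)\subseteq\{v_1,v_2,x,t\}$, uses Claim~\ref{claim:cycle} together with $x,t\notin V(\mathcal{D}_1)$ and \ref{cond:ct ct} to force $v_1y,v_2y\in E(D')$ for a single closed trail $D'$, uses the orientation (both of these edges are traversed \emph{into} $y$) to see that the symmetric difference is one closed trail, and then replaces $\{D,D'\}$ by it to drop the cardinality below $\ell$. Up to there you are on the paper's route, and those parts are fine.

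The problem is your third case $v_1,v_2\in W$, which you leave unproven. Your diagnosis of the difficulty is accurate: with $P=v_1tv_2$ we have $x\notin V(C')$, the exchange that keeps $D$ and replaces only $D'$ by $C'$ preserves both the cardinality and $|\bigcup_i E(D_i)|$ (since $|E(C')|=|E(D')|$ here), so neither (1) nor (2) is violated, and deleting $D$ as well leaves the edges $xu$ ($u$ a leaf of $D$ other than $t$) without an obvious dominating closed trail. But the two escape routes you sketch are not arguments. Augmenting $D''$ by ``an extra pair of edges incident to $x$'' would require two edges from $x$ to $V(D'')=V(D')\cup\{t\}$; apart from $t$, no neighbour of $x$ need lie on $D'$ at all, and the single edge $xt$ cannot be absorbed into a closed trail, so there is in general nothing to add. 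The appeal to the minimality of the Hall-deficient set $S$ and to $|N_F(t)\cap S|\le 2$ is not attached to any concrete modified system or inequality; in the paper those hypotheses are used elsewhere in Case 2-2 (Claim~\ref{claim:u degree} and the final degree count for the matching $N$), not in this claim. For comparison, the paper performs the same exchange $(\mathcal{D}\setminus\{D,D'\})\cup\{C'\}$ uniformly in all three configurations, relying on $t\in V(C')$ to dominate $xt$; the issue you raise about the remaining edges of $D$ when $x\notin V(C')$ is not given a separate justification there either, so it is a point worth settling carefully--but as it stands your proposal simply does not contain a proof of the claim in this subcase, which you yourself call the crux.
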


\begin{proof}
    Assume to the contrary that there are two distinct vertices $v_1, v_2\in U\cup W$ such that $\varphi(v_1)=\varphi(v_2)=y$ for some $y\in V(H)\setminus (U\cup W)$.
    Let $P$ be a $v_1v_2$-path of $H$ such that $V(P)\subseteq \{v_1,v_2,t,x\}$, and let $C=v_1Pv_2yv_1$. 
    Since $C$ is a cycle of $H$, by Claim~\ref{claim:cycle}, at least two edges of $C$ belong to $E(\mathcal{D}_1)$.
    This together with the fact that $x,t\notin V(\mathcal{D}_1)$ implies that $v_1y,v_2y\in E(\mathcal{D}_1)$.
    By the property \ref{cond:ct ct}, there is a closed trail $D'\in \mathcal{D}_1$ such that $v_1y,v_2y\in E(D')$.

    Let $C'$ be a subgraph of $H$ induced by the symmetric difference of $E(D')$ and $E(C)$.
    By the choice of $\varphi(v_1)$ and $\varphi(v_2)$, there are $yv_1$-walk $P_1$ and $yv_2$-walk $P_2$ of $D'-\{v_1y,v_2y\}$, which implies that $C'$ is connected.
    Thus, $C'$ is a closed trail of $H$.
    Furthermore, the properties \ref{cond:ct ct} and \ref{cond:star ct} imply that $C'$ is edge-disjoint from every subgraph in $\mathcal{D}\setminus \{D,D'\}$.
    Then $(\mathcal{D}\setminus\{D,D'\})\cup \{C'\}$ is a dominating system of $H$ of cardinality less than $\ell$, a contradiction.
\end{proof}

Let $N=\{xt\}\cup \{v\varphi(v)\mid v\in U\cup W\}$.
By Claims~\ref{claim:edge between uw} and \ref{claim:phi inj}, $N$ is a matching of $H$ with $p+q+1$ edges, and equivalently $N$ is an independent set of $G$ of order $p+q+1$.
On the other hand, since 
\begin{align*}
    d_G(xt)=d_H(x)+d_H(t)-2\leq (p+1)+(q+2)-2=p+q+1,
\end{align*}
we infer that 
\begin{align*}
    |N|=p+q+1\geq d_G(xt)\geq \delta_G(N),
\end{align*}
a contradiction by the assumption that every independent set $I$ of $G$ satisfies $|I|\leq \delta_G(I)-1$.
This completes the proof of Theorem~\ref{thm:preimage}.

\section{Concluding remark}

In our proof of Theorem~\ref{thm:preimage}, the assumption that every independent set $I$ of $G$ satisfies $|I|\leq \delta_G(I)-1$ is used only for the existence of a 2-factor and Case 2.2.
By considering a dominating system of $H$ that allows stars with one or two edges, our proof of Theorem~\ref{thm:preimage} shows the following, which is a result on a degenerate cycle partition of claw-free graphs.

\begin{theorem}\label{thm:degenerate cycle}
    Let $k$ be a positive integer and let $G$ be a claw-free graph of order $n$.
    If $\sigma_{k+1}(G)\geq n$, then there is a partition of $V(G)$ into at most $k$ parts such that each part induces $K_1$, $K_2$ or a Hamiltonian subgraph of $G$.
\end{theorem}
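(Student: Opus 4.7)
The plan is to mirror the proof of Theorem~\ref{thm:preimage} with the notion of dominating system relaxed to permit stars with $1$ or $2$ edges. Call a \emph{generalized dominating system} of a graph $H$ any set $\mathcal{D}=\{D_1,\dots,D_m\}$ of pairwise edge-disjoint subgraphs of $H$, each being a closed trail or a star with any positive number of edges, such that every edge not contained in any $D_i$ has at least one end in a closed trail of $\mathcal{D}$. By a direct analogue of Theorem~\ref{thm:k system}, such a system of cardinality $m$ corresponds to a partition of $V(L(H))=E(H)$ into $m$ parts inducing Hamiltonian subgraphs (from closed trails with their assigned pendant edges, or from stars with at least $3$ edges), $K_2$'s (from $2$-edge stars), and $K_1$'s (from $1$-edge stars). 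Via Ryj\'{a}\v{c}ek's closure it then suffices to prove the line-graph statement: if $H$ is triangle-free and $\sigma_{k+1}(L(H))\ge|L(H)|$, then $H$ admits a generalized dominating system of cardinality at most $k$.

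To prove this line-graph statement, I argue by contradiction on a generalized dominating system $\mathcal{D}$ of minimum cardinality and, subject to that, maximum edge count. The existence of any generalized dominating system is now automatic---take each edge of $H$ as a $1$-edge star---so Theorem~\ref{thm:general graph} is no longer needed. Properties~\ref{cond:ct ct}--\ref{cond:star ct} and Claim~\ref{claim:cycle} still follow from the minimality and maximality of $\mathcal{D}$, and Case~1 transfers verbatim since it uses only $\sigma_{k+1}$, the matching $M$, and Claim~\ref{claim:cycle}; a $1$-edge star is accommodated by arbitrarily designating one of its endpoints as center.

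The key simplification comes in Case~2. In the original proof the subdivision into Cases~2-1 and~2-2 was forced because each new star $D_t$, spanned by the edges from $t\in N_F(S)$ into $N_F(t)\cap S$, had to have at least three edges to qualify as a dominating-system star, and the condition $|I|\le\delta_G(I)-1$ was invoked precisely to eliminate Case~2-2. With degenerate stars now permitted in $\mathcal{D}$, the Case~2-1 construction is valid for every $t\in N_F(S)$ regardless of $|N_F(t)\cap S|\ge 1$, and the same cardinality count $|\mathcal{D}'|=|\mathcal{D}|-|S|+|N_F(S)|<|\mathcal{D}|$ contradicts minimality throughout all of Case~2; Case~2-2 never arises.

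The step I expect to be the main technical obstacle is the initial closure reduction, specifically lifting a partition of $V(\text{cl}(G))$ into $K_1$/$K_2$/Hamiltonian-induced parts back to $V(G)$: unlike the $2$-factor preservation of Theorem~\ref{thm:closure 2factor}, a $K_2$ part in the closure may fail to induce $K_2$ in $G$ if its unique edge was created during the closure. I would handle this by adapting the single-closure-step argument of \cite{RSS1999}: when a completion edge $uw\in E(\text{cl}(G))\setminus E(G)$ added at a locally connected vertex $v$ is undone, one either reroutes through $v$ to preserve a Hamiltonian part or absorbs the resulting small part into the part containing $v$, in either case without increasing the total number of parts.
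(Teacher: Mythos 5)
Your proposal is essentially the paper's own proof: the paper obtains Theorem~\ref{thm:degenerate cycle} precisely by rerunning the proof of Theorem~\ref{thm:preimage} with stars of one or two edges admitted, observing that the hypothesis $|I|\leq\delta_G(I)-1$ was used only to guarantee an initial dominating system and to rule out Case 2-2, both of which become unnecessary---exactly your argument, with Case 1 and the cardinality count in Case 2 unchanged. The closure-lifting step you single out (returning a $K_1$/$K_2$/Hamiltonian partition from $\mathrm{cl}(G)$ to $G$, the degenerate analogue of Theorem~\ref{thm:closure 2factor}) is left implicit in the paper as well, so on that point your sketch is no less detailed than the source.
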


\section*{Acknowledgement}

The author thanks Professor Katsuhiro Ota for giving valuable comments on this paper.
The author has been partially supported by Keio University SPRING scholarship JPMJSP2123 and JST ERATO JPMJER2301.

\end{document}